\theoremstyle{plain}
\newtheorem{theorem}{Theorem}
\newtheorem*{theorem*}{Theorem}
\numberwithin{equation}{section}
\newcommand{\ra}{\rightarrow}
\newcommand{\OO}{\Omega} 
\newcommand{\oo}{\omega} 
\newcommand{\ind}{{\rm ind}} 
\newcommand{\oD}{\overline{D}}
\begin{document}

\title {Fredholm theory for cofinite sets}

\date{}

\author[P.L. Robinson]{P.L. Robinson}

\address{Department of Mathematics \\ University of Florida \\ Gainesville FL 32611  USA }

\email[]{paulr@ufl.edu}

\subjclass{} \keywords{}

\begin{abstract}

We investigate two ways in which self-maps of an infinite set may be close to bijections; our investigation generates a $\mathbb{Z}$-valued index theory and a corresponding extension by $\mathbb{Z}$ for the quotient of the full symmetric group by its finitary subgroup. 

\end{abstract}

\maketitle

\medbreak

\section{Introduction} 

Let $\OO$ be an infinite set. Of the several ways in which we may regard a function $f: \OO \ra \OO$ as being close to bijective, we shall focus on two. According to the one, $f$ differs from a bijection on a finite subset of $\OO$; we call such an $f$ {\it almost bijective}. According to the other, $f$ restricts to a bijection between two cofinite subsets of $\OO$; we call such an $f$ a {\it near-bijection}.  

\medbreak 

Associated to any map $f: \OO \ra \OO$ are its {\it range}  $f(\OO) := \{ f(\oo) : \oo \in \OO \}$ and its {\it monoset} 
$$\OO_f :=  \{ \oo \in \OO : \overleftarrow{f} (\{f(\oo)\}) = \{ \oo \} \}$$ 
where $\overleftarrow{f}$ is the map that sends each subset of $\OO$ to its preimage under $f$. We shall write the complement of $A \subseteq \OO$ as $A' = \OO \setminus A$ and the cardinality of $A$ as $|A|$ when finite. In these terms, $f$ is a near-bijection iff both $f(\OO)'$ and $\OO_f'$ are finite. When $f$ is a near-bijection, we define its {\it index} by 
$$\ind (f) = (|\OO_f'| - |f(\OO_f')| ) - |f(\OO)'|.$$

\medbreak 

\begin{theorem*}
For a function $f : \OO \ra \OO$ the following conditions are equivalent: \par 
$\bullet$ $f$ is a near-bijection and $\ind (f) = 0;$ \par 
$\bullet$ $f$ is almost a bijection. 
\end{theorem*}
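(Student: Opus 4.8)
The plan is to reduce every statement to the behaviour of $f$ on its monoset $M := \OO_f$, on which $f$ is automatically injective. First I would record the structural facts behind the index: since each $\oo \in M$ is the sole preimage of $f(\oo)$, we have $\overleftarrow{f}(f(M)) = M$, so $f(M)$ and $f(M')$ are disjoint and $f(\OO) = f(M) \sqcup f(M')$. Hence $f(M)' = f(\OO)' \sqcup f(M')$; in particular $f(M)$ is cofinite whenever $f$ is a near-bijection, and substituting $|f(\OO)'| = |f(M)'| - |f(M')|$ into the definition collapses the index to $\ind(f) = |M'| - |f(M)'|$. Thus $\ind(f)$ is exactly the complement-deficit of the canonical injection $f|_M : M \ra f(M)$ between two cofinite sets.

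Granting this, the implication $\ind(f) = 0 \Rightarrow$ almost bijective is immediate: if $|M'| = |f(M)'|$, pick any bijection $M' \ra f(M)'$ of these equinumerous finite sets and glue it to $f|_M$. The result is a bijection $g$ of $\OO$ agreeing with $f$ on $M$, and since $M' = \OO \setminus M$ is finite, $f$ differs from the bijection $g$ only on a finite set, so $f$ is almost bijective.

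For the converse, suppose $f$ agrees with a bijection $g$ off a finite set $F$. Then $f|_{F'} = g|_{F'}$ is a bijection of the cofinite set $F'$ onto the cofinite set $g(F')$, so $f$ is a near-bijection outright. To see that its index vanishes I would prove the invariance statement that for every bijective restriction $f|_A : A \ra B$ with $A$ and $B$ cofinite one has $|A'| - |B'| = |M'| - |f(M)'| = \ind(f)$; applied to $A = F'$ and $B = g(F')$ this gives $\ind(f) = |F| - |g(F)| = 0$, the last equality because $g$ is injective.

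The invariance lemma is the main obstacle, and everything else is bookkeeping around it. To prove it I would compare an arbitrary cofinite bijective restriction $f|_A$ with the canonical one $f|_M$. The two domains agree off the finite sets $A \cap M'$ and $A' \cap M$ — finiteness of $A \cap M'$ comes from noting that any collision partner of a point of $A$ must lie in the finite set $A'$ — and a one-line count gives $|A'| - |M'| = |A' \cap M| - |A \cap M'|$. The delicate point is transporting this to the range: because points of $M$ have unique global preimages, $f(A' \cap M)$ is disjoint from $B = f(A)$, while $f(A \cap M') \subseteq B$; feeding these disjointness facts and the injectivity of $f$ on $A$ and on $M$ into the analogous count yields $|B'| - |f(M)'| = |A' \cap M| - |A \cap M'|$. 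Subtracting the two identities proves the invariance, and with it the theorem.
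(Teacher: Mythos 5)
Your proposal is correct, and for the substantive half of the theorem it takes a genuinely different route from the paper's. For the direction ``almost bijective $\Rightarrow \ind(f) = 0$,'' the paper first verifies the identity $|\OO_f'| - |f(\OO_f')| = |f(\OO)'|$ for self-maps of a \emph{finite} set, transfers it to maps $f \equiv {\rm I}$ (Theorem \ref{fin}) by restricting to the finite set $\oD = D \cup f(D)$, and then reaches a general almost bijective $f \equiv g$ by applying the case $f \circ g^{-1} \equiv {\rm I}$ together with the transformation rule of Theorem \ref{right}. You instead prove a single invariance lemma: for a near-bijection, the deficiency $|A'| - |B'|$ is the same for \emph{every} bijective restriction $A \xrightarrow{f} B$ between cofinite sets, equal to $|\OO_f'| - |f(\OO_f)'|$, which by the range decomposition $f(\OO_f)' = f(\OO)' \cupdot f(\OO_f')$ (the paper's Theorem \ref{comp}) is exactly $\ind(f)$. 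Your counting steps check out: finiteness of $A \cap \OO_f'$ is precisely the argument of the paper's Theorem \ref{injeq}, and the two disjointness facts --- $f(A' \cap \OO_f) \cap f(A) = \emptyset$ and $f(A \cap \OO_f') \cap f(\OO_f) = \emptyset$ --- follow from the unique-preimage property of points of $\OO_f$, so both symmetric counts $|A'| - |\OO_f'| = |A' \cap \OO_f| - |A \cap \OO_f'| = |B'| - |f(\OO_f)'|$ are valid (all sets involved being finite). Your other direction is essentially the paper's Theorem \ref{indzero} surgery in mildly simplified form: you redefine $f$ on all of $\OO_f'$ to biject onto the equinumerous set $f(\OO_f)'$, where the paper retains one point per fiber and fills $f(\OO)'$. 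As for what each approach buys: the paper's intermediate machinery (Theorems \ref{left}, \ref{right}, \ref{fin}) is reused downstream in the group-theoretic development, whereas your lemma is self-contained, avoids the finite-set reduction entirely, and is stronger in a useful way --- since an almost-equal pair of near-bijections share a common cofinite bijective restriction after shrinking, and pre/post-composition by a permutation transports such restrictions without changing $|A'| - |B'|$, your lemma yields the paper's Theorems \ref{insensitive} and \ref{coset} nearly for free, without the case analysis in the proof of Theorem \ref{insensitive}.
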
 

We prove rather more than this. Let us say that two self-maps of $\OO$ are {\it almost equal} iff they differ on a finite set; of course, almost equality is an equivalence relation, which we shall denote by $\equiv$. Composition of near-bijections is compatible with almost equality: in fact, the $\equiv$-classes of near-bijections constitute a group $\mathbb{G}_{\OO}$; this contains the group $\mathbb{S}_{\OO}$ comprising all $\equiv$-classes of almost bijective functions. The index function is constant on $\equiv$-classes and indeed passes to a surjective group homomorphism ${\rm Ind}: \mathbb{G}_{\OO} \ra \mathbb{Z}$ with $\mathbb{S}_{\OO}$ as its kernel; thus we have a (split) short exact sequence 
$${\rm I} \ra \mathbb{S}_{\OO} \ra \mathbb{G}_{\OO} \ra \mathbb{Z} \ra 0.$$ 

\medbreak 

Along with the necessary supporting material, we prove some related results: for example, if $f$ is injective and $f(\OO)'$ is nonempty but finite, then $f$ is not almost equal to a surjection; in a similar vein, if $f$ is surjective and $\OO_f'$ is nonempty but finite, then $f$ is not almost equal to an injection. 

\medbreak 

As will be clear from this brief summary, much of the development echoes the standard Fredholm theory for operators, hence our title.  

\medbreak 

\section{Almost equality and near-bijections} 

\medbreak 

Let $\OO$ be an infinite set. Our concern throughout will be with self-maps of $\OO$: that is, with functions from $\OO$ to itself. 

\medbreak 

{\bf Definition:} Two self-maps of $\OO$ are {\it almost equal} iff they agree on a cofinite subset of $\OO$. 

\medbreak 

To the functions $f: \OO \ra \OO$ and $g: \OO \ra \OO$ we associate their {\it disagreement set} 
$$D(f, g) = \{ \oo : f(\oo) \neq g(\oo) \}.$$
In these terms, $f$ and $g$ are almost equal iff $D(f, g)$ is finite. To indicate that $f$ and $g$ are almost equal, we shall write $f \equiv g$. 

\begin{theorem} 
Almost equality is an equivalence relation.
\end{theorem}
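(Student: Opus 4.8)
The plan is to work throughout with the disagreement-set reformulation recorded just above the statement: $f \equiv g$ precisely when $D(f, g)$ is finite. I would then verify in turn the three defining properties of an equivalence relation, reducing each to an elementary assertion about the finiteness of disagreement sets.

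First, reflexivity is immediate: since $f(\oo) = f(\oo)$ for every $\oo \in \OO$, we have $D(f, f) = \emptyset$, which is finite, so $f \equiv f$. Symmetry is equally direct: the condition $f(\oo) \neq g(\oo)$ is symmetric in $f$ and $g$, whence $D(f, g) = D(g, f)$; thus if $D(f, g)$ is finite then so is $D(g, f)$, and $f \equiv g$ yields $g \equiv f$.

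The only step calling for a genuine (if modest) argument is transitivity, which is where I would place the crux. Suppose $f \equiv g$ and $g \equiv h$, so that $D(f, g)$ and $D(g, h)$ are both finite. The key observation is the inclusion
$$D(f, h) \subseteq D(f, g) \cup D(g, h),$$
which I would establish by contraposition: if $\oo \notin D(f, g) \cup D(g, h)$, then $f(\oo) = g(\oo)$ and $g(\oo) = h(\oo)$, hence $f(\oo) = h(\oo)$, so $\oo \notin D(f, h)$. Since a subset of a finite set is finite and the union of two finite sets is finite, $D(f, h)$ is finite, giving $f \equiv h$.

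Together these three verifications establish that $\equiv$ is an equivalence relation. The entire difficulty, such as it is, resides in noticing the transitivity inclusion displayed above; reflexivity and symmetry are purely formal, and the finiteness bookkeeping is routine.
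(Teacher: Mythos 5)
Your proof is correct and matches the paper's approach exactly: the paper likewise dismisses reflexivity and symmetry as immediate and rests transitivity on the same inclusion $D(f, h) \subseteq D(f, g) \cup D(g, h)$. Your version merely spells out the routine details the paper leaves implicit.
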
 

\begin{proof} 
Only transitivity requires any effort; if $f, g, h$ are self-maps of $\OO$ then 
$$D(f, h) \subseteq D(f, g) \cup D(g, h).$$
\end{proof} 

\medbreak 

As usual, we shall indicate the $\equiv$-class of a function $f$ by $[f]$. 

\medbreak 

One way in which a self-map of $\OO$ can be close to a bijection is for it to be almost equal to a bijection; we say that such a map is {\it almost bijective}. Thus, an almost bijective map arises precisely by changing the values of a bijection on a finite set. A less restrictive way in which a self-map of $\OO$ can be close to a bijection is as follows. 

\medbreak 

{\bf Definition:} The self-map $f: \OO \ra \OO$ is a {\it near-bijection} (or is {\it nearly bijective}) iff there exist cofinite subsets $A \subseteq \OO$ and $B \subseteq \OO$ such that the restriction $A \xrightarrow{f} B$ is a bijection. 

\medbreak 

Of course, bijections are certainly near-bijections. More generally, self-maps that are almost equal to bijections are near-bijections. 

\begin{theorem} \label{almostnear}
If $f: \OO \ra \OO$ is almost bijective then $f$ is a near-bijection. 
\end{theorem}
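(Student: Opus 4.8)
The plan is to work directly from the two definitions, producing the cofinite sets demanded by near-bijectivity with no auxiliary machinery. Since $f$ is almost bijective, there is a bijection $g : \OO \ra \OO$ with $D := D(f, g)$ finite. I would take $A := D'$, the cofinite set on which $f$ and $g$ agree, and then show that $f$ restricts to a bijection from $A$ onto its image $B := f(A)$, which I will also need to be cofinite.

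The first step is to observe that on $A$ we have $f = g$, so $f$ restricted to $A$ coincides with $g$ restricted to $A$. As $g$ is a bijection it is injective, hence so is $g$ restricted to $A$, and therefore $f$ restricted to $A$ is injective as well. Consequently $f$ maps $A$ bijectively onto $f(A) = g(A) = B$, and the only remaining task is to verify that $B$ is cofinite.

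For this I would exploit the bijectivity of $g$ to rewrite the image of a complement as the complement of an image, namely $B = g(D') = g(D)'$. Since $g$ is injective and $D$ is finite, $g(D)$ is finite (of cardinality $|D|$), so $B = g(D)'$ is cofinite. Taking $A$ and $B$ as constructed then exhibits cofinite sets between which $f$ restricts to a bijection, so $f$ is a near-bijection.

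I expect no serious obstacle. The only point that repays a moment's care is the identity $g(D') = g(D)'$, which is false for a general self-map and uses the bijectivity of $g$ in an essential way: surjectivity gives the inclusion $\supseteq$ and injectivity gives $\subseteq$. The finiteness of $D$ then transfers to $g(D)$ precisely because a bijection preserves cardinality.
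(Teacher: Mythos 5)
Your proof is correct and is essentially identical to the paper's: both take $A$ to be the cofinite agreement set, set $B = f(A) = g(A)$, and use the bijectivity of $g$ to conclude $B' = g(A')$ is finite. Your explicit justification of the identity $g(D') = g(D)'$ is a fair elaboration of the paper's one-line appeal to bijectivity, not a different argument.
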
 

\begin{proof} 
Let $f \equiv g$ with $g$ bijective. Say $f$ and $g$ agree on the cofinite set $A$ and let $B = f(A) = g(A)$. As $g$ is bijective, $\OO \setminus B = g(\OO \setminus A)$ is finite. By construction, $f$ maps $A$ to $B$ bijectively. 
\end{proof} 

\medbreak 

In the opposite direction, we may ask whether or not it is possible to change the values of a given near-bijection on a finite subset so as to produce a true bijection. We shall answer this question completely and precisely in Theorem \ref{indzero}. 

\medbreak 

Now let $f$ be any self-map of $\OO$. So as not to suggest by infelicitous notation that a map has an inverse, we shall write $\overleftarrow{f}$ for the map that assigns to each subset of $\OO$ its preimage under $f$: thus, if $B \subseteq \OO$ then 
$$\overleftarrow{f} (B) = \{ \oo \in \OO : f(\oo) \in B \}.$$ 
The {\it range} of $f$ is as usual the subset $f(\OO)$ of $\OO$ defined by 
$$f(\OO) = \{ f(\oo) : \oo \in \OO \}.$$
The {\it monoset} of $f$ is the subset $\OO_f$ of $\OO$ defined by 
$$\OO_f = \{ \oo \in \OO : \overleftarrow{f} (\{ f(\oo) \}) = \{ \oo \} \}.$$
Note that $f$ is: surjective iff $f(\OO)' = \emptyset$; injective iff $\OO_f' = \emptyset$. 

\medbreak 

The following elementary sufficient conditions for near-bijectivity will soon be superceded. 

\begin{theorem} \label{insur}
The self-map $f$ of $\OO$ is a near-bijection if: either (i) $f$ is injective and $f(\OO)$ is cofinite; or (ii) $f$ is surjective and $\OO_f$ is cofinite. 
\end{theorem}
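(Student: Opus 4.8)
The plan is to treat the two cases separately; case (i) is essentially immediate, while case (ii) carries the real content.

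For case (i), suppose $f$ is injective with $f(\OO)$ cofinite. I would take $A = \OO$, whose complement $A' = \emptyset$ is finite, together with $B = f(\OO)$. By the very meaning of injectivity, $f : \OO \ra f(\OO)$ is a bijection onto its range, so $f : A \ra B$ is a bijection between cofinite sets, and $f$ is a near-bijection with nothing further to check.

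For case (ii), suppose $f$ is surjective with $\OO_f$ cofinite, and write $F = \OO_f'$, a finite set. I would set $A = \OO_f$ and $B = f(\OO_f)$. The first thing to verify is that $f$ restricted to $\OO_f$ is injective: if $\oo_1, \oo_2 \in \OO_f$ satisfy $f(\oo_1) = f(\oo_2)$, then $\oo_2 \in \overleftarrow{f}(\{f(\oo_1)\}) = \{\oo_1\}$, where the equality uses $\oo_1 \in \OO_f$, forcing $\oo_1 = \oo_2$. Thus $f : A \ra B$ is a bijection, and $A = \OO_f$ is cofinite by hypothesis.

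The remaining point, which I expect to be the main obstacle, is that $B = f(\OO_f)$ is cofinite. This does not follow from injectivity of $f|_{\OO_f}$ alone and must draw on surjectivity. Since $\OO = \OO_f \cup F$, surjectivity gives $\OO = f(\OO) = f(\OO_f) \cup f(F)$, so every element outside $f(\OO_f)$ lies in $f(F)$; that is, $B' \subseteq f(F)$. As $F$ is finite, $f(F)$ is finite, whence $B'$ is finite and $B$ is cofinite. This establishes that $f : A \ra B$ is a bijection between cofinite sets, so $f$ is a near-bijection.
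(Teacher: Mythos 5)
Your proof is correct and follows essentially the same route as the paper's: in case (ii) both arguments take $A = \OO_f$, $B = f(\OO_f)$ and deduce cofiniteness of $B$ from the inclusion $B' \subseteq f(\OO_f')$ via surjectivity. You merely spell out the injectivity of $f|_{\OO_f}$, which the paper treats as immediate from the definition of the monoset.
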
 

\begin{proof} 
(i) is clear: $f$ gives a bijection from $\OO$ to $f(\OO)$. For (ii) let $A = \OO_f$ and $B = f(\OO_f)$: of course, $A \xrightarrow{f} B$ is a bijection; by hypothesis $A$ is cofinite, while $\OO \setminus B = f(\OO) \setminus f(A) \subseteq f(\OO \setminus  A)$ is finite. 
\end{proof} 

\medbreak 

{\bf Remark:} In the proof, $\OO \setminus B = f(\OO \setminus A)$: indeed, if $\oo' \in f(\OO \setminus A)$ then $\oo' = f(\oo)$ for some $\oo \in \OO \setminus A$; if also $\oo' = f(\oo_0)$ for some $\oo_0 \in A$ then $\overleftarrow{f} (\{ f(\oo_0) \}) \supseteq \{ \oo_0, \oo \}$ (contradiction) so $\oo' \notin f(A)$. On this point, see also Theorem \ref{comp}. 

\medbreak 

As we noted in Theorem \ref{almostnear}, a function that is almost bijective is a near-bijection. The converse of this statement is false. 

\begin{theorem} \label{in}
Let $f: \OO \ra \OO$ be injective and let $f(\OO)$ be cofinite and proper. If $g: \OO \ra \OO$ is surjective then $g \nequiv f$. 
\end{theorem}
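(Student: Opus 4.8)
The plan is to argue by contradiction through a finite counting argument, exploiting the fact that an injective map with a proper cofinite range must "miss" at least one point of $\OO$. Suppose $f \equiv g$ with $g$ surjective, and set $D = D(f, g)$, which is finite by almost equality. Write $k = |f(\OO)'|$; since $f(\OO)$ is assumed cofinite and proper, $k$ is finite and $k \geq 1$.

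First I would record a disjoint decomposition of the whole set. Because $f$ is injective, $f(D)$ and $f(D')$ are disjoint, and together they exhaust $f(\OO)$; adjoining the complement $f(\OO)'$ yields
$$\OO = f(D') \sqcup f(D) \sqcup f(\OO)'.$$
Since $f$ and $g$ agree on the cofinite set $D'$, we have $g(D') = f(D')$, and hence $g(\OO) = g(D') \cup g(D) = f(D') \cup g(D)$.

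Now comes the crux. Surjectivity of $g$ means $g(\OO) = \OO$, so the two pieces $f(D)$ and $f(\OO)'$, each disjoint from $f(D')$, can only be reached through $g(D)$; that is, $f(D) \cup f(\OO)' \subseteq g(D)$. These two pieces are themselves disjoint (the first lies in $f(\OO)$, the second is its complement), and they are finite of cardinalities $|D|$ and $k$ respectively, so $g(D)$ must contain at least $|D| + k$ distinct points. But $g(D)$ is the image of a set of size at most $|D|$, so $|g(D)| \leq |D|$; this forces $k \leq 0$, contradicting $k \geq 1$.

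The step demanding the most care is the inclusion $f(D) \cup f(\OO)' \subseteq g(D)$: one must verify that neither $f(D)$ nor $f(\OO)'$ can already be covered by $f(D') = g(D')$. This is precisely where the hypotheses enter — injectivity of $f$ separates $f(D)$ from $f(D')$, and the definition of the complement separates $f(\OO)'$ from $f(\OO) \supseteq f(D')$. Everything else is routine bookkeeping with finite cardinalities.
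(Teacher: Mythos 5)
Your proof is correct, but it takes a genuinely different route from the paper. The paper argues constructively: fixing $\overline{\oo} \in f(\OO)'$, it uses surjectivity of $g$ to pull back $\overline{\oo}$ and then, inductively, to pull back successive values of $f$, with injectivity of $f$ guaranteeing that the points so produced are pairwise distinct; this exhibits an explicit infinite sequence inside $D(f,g)$, with no contradiction hypothesis needed. You instead assume $D$ finite and run a pigeonhole count: from $\OO = f(D') \sqcup f(D) \sqcup f(\OO)'$ and $g(\OO) = f(D') \cup g(D)$, surjectivity forces $f(D) \sqcup f(\OO)' \subseteq g(D)$, so $|g(D)| \geqslant |D| + k$ while trivially $|g(D)| \leqslant |D|$, forcing $k \leqslant 0$. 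Every step checks out: injectivity of $f$ is exactly what makes the three-way decomposition disjoint and gives $|f(D)| = |D|$, and finiteness of $D$ legitimizes the cardinal arithmetic (which would collapse if $D$ were infinite, so the contradiction hypothesis is doing real work). What each approach buys: the paper's induction is elementary and pointwise, needing no counting at all; your argument is shorter and recasts the obstruction as a cardinality deficit, which is very much in the spirit of the index theory developed later --- compare the Remark after Theorem \ref{insensitive}, where the same conclusion (for near-bijective $g$) is obtained from ${\rm ind}(f) < 0 \leqslant {\rm ind}(g)$; your count is essentially that index comparison carried out by hand, valid for arbitrary surjective $g$.
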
 

\begin{proof} 
We show that the disagreement set $D := D(f, g)$ is infinite. Fix a choice $\overline{\oo} \in f(\OO)'$; thus, $f$ does not have $\overline{\oo}$ as a value. Choose $\oo_0 \in \OO$ such that $g(\oo_0) = \overline{\oo}$; this is possible, as $g$ is surjective. Of course, $f$ and $g$ disagree at $\oo_0$; that is, $\oo_0 \in D$. Choose $\oo_1 \in \OO$ such that $g(\oo_1) = f(\oo_0)$. \par
Claim: $\oo_1 \neq \oo_0$ and $\oo_1 \in D$. 
[If $\oo_0 = \oo_1$ then $g(\oo_0) = g(\oo_1) = f(\oo_0)$ (contradicting $\oo_0 \in D$); thus $\oo_0 \neq \oo_1$. If $\oo_1 \notin D$ then $f(\oo_1) = g(\oo_1) = f(\oo_0)$ so (as $f$ is injective) $\oo_1 = \oo_0$ (contradiction); thus $\oo_1 \in D$.] \par 
This may be repeated inductively to produce a sequence $(\oo_n)_{n = 0}^{\infty}$ of distinct points in $D$. 
\end{proof} 

\medbreak 

Thus, such near-bijections (see Theorem \ref{insur}(i)) are not even almost equal to surjections. 

\medbreak 

\begin{theorem} \label{sur}
Let $f: \OO \ra \OO$ be surjective and let $\OO_f$ be cofinite and proper. If $g: \OO \ra \OO$ is injective then $g \nequiv f$. 
\end{theorem}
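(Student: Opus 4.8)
The plan is to mirror the argument for Theorem \ref{in}, reversing the roles of the two hypotheses. There the proof exploited a value \emph{missed} by the injective $f$ and pulled points back through the surjective $g$; here I will exploit a value \emph{hit twice} by the surjective $f$ and pull points back through $f$ itself, pushing forward through the injective $g$. Since $\OO_f$ is proper there exist distinct $a, b \in \OO$ with $f(a) = f(b) =: c$; this doubled value will play the role that the missed value $\overline{\oo}$ played before. I assume toward a contradiction that $g \equiv f$, set $D := D(f, g)$, and aim to show $D$ is infinite.

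Starting from $\oo_0 := a$, I will build a sequence $(\oo_n)$ of \emph{distinct} points satisfying $f(\oo_{n+1}) = g(\oo_n)$, each $\oo_{n+1}$ chosen among the $f$-preimages of $g(\oo_n)$ — a set that is nonempty by surjectivity of $f$. Granting distinctness, membership in $D$ is immediate: if $\oo_{n+1} \notin D$ then $f(\oo_{n+1}) = g(\oo_{n+1})$, so $g(\oo_{n+1}) = g(\oo_n)$, and injectivity of $g$ forces $\oo_{n+1} = \oo_n$, contradicting distinctness. Hence $\oo_1, \oo_2, \ldots$ all lie in $D$, and producing an infinite distinct sequence will finish the proof.

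The crux is therefore the inductive extension: given distinct $\oo_0, \ldots, \oo_m$ obeying $f(\oo_{i+1}) = g(\oo_i)$, I must produce an $f$-preimage of $g(\oo_m)$ lying outside $\{\oo_0, \ldots, \oo_m\}$. Suppose instead that every such preimage already occurs in the list. If one of them is $\oo_k$ with $k \geq 1$, then $f(\oo_k) = g(\oo_{k-1})$ combined with $f(\oo_k) = g(\oo_m)$ and injectivity of $g$ gives $\oo_{k-1} = \oo_m$, impossible among distinct terms; so the only $f$-preimage of $g(\oo_m)$ can be $\oo_0 = a$. But then $g(\oo_m) = f(a) = c$, and $f(b) = c$ with $b \neq a$ exhibits a second preimage — a contradiction. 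Thus a fresh preimage always exists and the sequence extends indefinitely, giving infinitely many distinct points of $D$ and hence $g \not\equiv f$. I expect this escape through the second preimage $b$ to be the one delicate point: it is exactly where properness of $\OO_f$ is used, and it is the precise dual of the fact in Theorem \ref{in} that $\overline{\oo}$ is not a value of $f$. Everything else is the mirror-image book-keeping of that earlier argument.
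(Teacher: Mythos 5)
Your proof is correct and takes essentially the same approach as the paper: both construct a chain of distinct points with $f(\oo_{n+1}) = g(\oo_n)$, using surjectivity of $f$ to extend the chain, injectivity of $g$ to force distinctness and membership in the disagreement set, and properness of $\OO_f$ to supply a second preimage. Your explicit escape through the twin preimage $b$ when the chain threatens to return to $\oo_0$ is precisely the detail the paper compresses into ``the process of induction manufactures a sequence''; the only cosmetic difference is that the paper seeds the chain at a point of $\OO_f'$ already lying in $D$ (using injectivity of $g$ to find one), whereas you seed at $a$ unconditionally and simply discard $\oo_0$ from the count.
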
 

\begin{proof} 
Again, write $D$ in place of $D(f, g)$. By hypothesis, $f$ takes the same value at certain points of $\OO$ but $g$ separates them; accordingly, we may choose $\oo_0 \in \OO_f'$ such that $f(\oo_0) \neq g(\oo_0)$ whereupon $\oo_0 \in D$. As $f$ is surjective, we may choose $\oo_1 \in \OO$ such that $f(\oo_1) = g(\oo_0)$. \par
Claim: $\oo_1 \neq \oo_0$ and $\oo_1 \in D$. [If $\oo_0 = \oo_1$ then $f(\oo_0) = f(\oo_1) = g(\oo_0)$ (contradicting $\oo_0 \in D$); thus $\oo_0 \neq \oo_1$. If $\oo_1 \notin D$ then $g(\oo_1) = f(\oo_1) = g(\oo_0)$ so (as $g$ is injective) $\oo_1 = \oo_0$ (contradiction); thus $\oo_1 \in D$.] \par 
Again, the process of induction manufactures a sequence $(\oo_n)_{n = 0}^{\infty}$ of distinct points in $D$. 
\end{proof} 

\medbreak 

Thus, such near-bijections (see Theorem \ref{insur}(ii)) are not even almost equal to injections. 

\medbreak 

These results prompt the following. 

\medbreak 

{\bf Definition:} The self-map $f$ of $\OO$ is: a {\it near-surjection} (or is {\it nearly surjective}) iff $f(\OO)$ is cofinite; a {\it near-injection} (or is {\it nearly injective}) iff $\OO_f$ is cofinite. 

\medbreak 

Theorem \ref{in} says that if a near-surjective injection is not surjective, then it is not almost equal to a surjective function; Theorem \ref{sur} says that if a near-injective surjection is not injective, then it is not almost equal to an injective function. 

\medbreak 

There is another natural candidate for near-injectivity. 

\medbreak 

\begin{theorem} \label{injeq}
The self-map $f$ of $\OO$ is a near-injection iff $\OO$ has a cofinite subset on which $f$ is injective. 
\end{theorem}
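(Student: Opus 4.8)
The plan is to prove the two implications separately. The forward implication (near-injection $\Rightarrow$ injective on a cofinite set) is nearly immediate, while the reverse implication is where the genuine work lies. For the forward direction I would show that $f$ is already injective on its monoset $\OO_f$ itself: if $\oo_1, \oo_2 \in \OO_f$ satisfy $f(\oo_1) = f(\oo_2)$, then $\oo_2 \in \overleftarrow{f}(\{ f(\oo_1) \}) = \{ \oo_1 \}$ precisely because $\oo_1 \in \OO_f$, so $\oo_2 = \oo_1$. Since $f$ is a near-injection, $\OO_f$ is cofinite by definition, and $C = \OO_f$ is the required cofinite set.

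For the reverse direction, suppose $C$ is a cofinite subset on which $f$ is injective, and write $C' = \OO \setminus C$, which is finite. The key observation is that the only way a point $\oo \in C$ can fail to lie in $\OO_f$ is for its value $f(\oo)$ to be shared with some point of $C'$. Indeed, if $\oo \in C$ and $f(\oo) \notin f(C')$, then any $\oo_2$ with $f(\oo_2) = f(\oo)$ must lie in $C$ (otherwise $f(\oo) = f(\oo_2) \in f(C')$), and hence $\oo_2 = \oo$ by injectivity of $f$ on $C$; thus $\overleftarrow{f}(\{ f(\oo) \}) = \{ \oo \}$, i.e. $\oo \in \OO_f$. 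Taking contrapositives gives the containment $\OO_f' \subseteq C' \cup \{ \oo \in C : f(\oo) \in f(C') \}$.

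It then remains to check that the right-hand side is finite. The set $C'$ is finite by hypothesis, so its image $f(C')$ is finite; and since $f$ is injective on $C$, each value in $f(C')$ has at most one preimage inside $C$, so $\{ \oo \in C : f(\oo) \in f(C') \}$ is finite as well. Hence $\OO_f'$ is finite, $\OO_f$ is cofinite, and $f$ is a near-injection. The main obstacle is exactly this bookkeeping in the reverse direction: injectivity on $C$ controls collisions among points of $C$, but one must separately rule out the finitely many collisions created by the exceptional values $f(C')$, the crucial point being that injectivity on $C$ keeps the preimage of that finite value set finite.
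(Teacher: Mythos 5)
Your proof is correct and follows essentially the same route as the paper: the forward direction is the observation that $f$ is injective on $\OO_f$ itself, and in the reverse direction you decompose $\OO_f'$ into the finite complement $C'$ and the part inside $C$, which you control by showing (exactly as the paper does) that any collision involving a point of $C$ must involve a point of $C'$, so that injectivity of $f|_C$ bounds this part by the finite set $f(C')$. The only cosmetic difference is bookkeeping: you bound $\{\oo \in C : f(\oo) \in f(C')\}$ by counting preimages of the finite value set, while the paper phrases the same fact as $f$ being injective on $C \cap \OO_f'$ with values in $f(C')$.
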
 

\begin{proof} 
($\Rightarrow$) By definition, $f$ is injective on the cofinite set $\OO_f$. \\ 
($\Leftarrow$) Suppose $f$ to be injective on the cofinite subset $A \subseteq \OO$. Of course, the complement $\OO_f' = \OO \setminus \OO_f$ meets $A' = \OO \setminus A$ in a finite set. Now consider the restriction $A \cap \OO_f' \xrightarrow{f} \OO$. By supposition, this restriction is certainly injective. Let $\oo \in A \cap \OO_f'$: there exists $\oo' \neq \oo$ such that $f(\oo') = f(\oo)$ (as $\oo \in \OO_f'$) and $\oo' \notin A$ (as $f|_A$ is injective); it follows that $f(\oo) = f(\oo') \in f(\OO \setminus A)$. Thus $f$ is injective on $A \cap \OO_f'$ and there takes its values in the finite set $f(\OO \setminus A)$; so $A \cap \OO_f'$ is finite. Finally, $\OO_f' = (A' \cap \OO_f') \cup (A \cap \OO_f')$ is finite. 
\end{proof} 

\medbreak 

The relationship between the `near' versions of bijection, injection and surjection is exactly as we should wish. 

\begin{theorem} \label{near}
For a self-map $f$ of $\OO$ the following conditions are equivalent: \par
$\bullet$ $f$ is both nearly injective and nearly surjective; \par 
$\bullet$ $f$ is a near-bijection. 
\end{theorem}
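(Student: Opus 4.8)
The plan is to prove the two implications separately, leaning on Theorem \ref{injeq} in one direction and on the elementary structure of the monoset in the other.

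For the direction ($\bullet_2 \Rightarrow \bullet_1$), suppose $f$ is a near-bijection, so that $A \xrightarrow{f} B$ is a bijection for some cofinite $A, B \subseteq \OO$. Near-surjectivity is immediate: $f(\OO) \supseteq f(A) = B$, and $B$ is cofinite, so $f(\OO)$ is cofinite. Near-injectivity I would extract from Theorem \ref{injeq}: since the restriction $A \xrightarrow{f} B$ is a bijection, $f$ is in particular injective on the cofinite set $A$, and Theorem \ref{injeq} then guarantees that $\OO_f$ is cofinite, i.e.\ that $f$ is nearly injective.

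For the converse ($\bullet_1 \Rightarrow \bullet_2$), the natural candidate for the domain of the bijection is the monoset itself. First I would check that $f$ restricts to an injection on $\OO_f$: if $\oo_1, \oo_2 \in \OO_f$ satisfy $f(\oo_1) = f(\oo_2)$, then $\oo_2 \in \overleftarrow{f}(\{ f(\oo_1) \}) = \{ \oo_1 \}$, forcing $\oo_1 = \oo_2$. By near-injectivity $\OO_f$ is cofinite, so setting $A = \OO_f$ and $B = f(\OO_f)$ makes $A \xrightarrow{f} B$ a bijection by construction.

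The one substantive point is to confirm that $B = f(\OO_f)$ is cofinite, and this is where near-surjectivity enters. Writing $f(\OO) = f(\OO_f) \cup f(\OO_f')$ and using that $\OO_f'$ is finite (near-injectivity), we get that $f(\OO_f')$, and hence $f(\OO) \setminus f(\OO_f)$, is finite. Therefore $B' = (\OO \setminus f(\OO)) \cup (f(\OO) \setminus f(\OO_f))$ is a union of two finite sets—the first finite by near-surjectivity—so $B$ is cofinite and $f$ is a near-bijection. I expect the only mild subtlety to be keeping straight that cofiniteness of $f(\OO_f)$ follows from that of $f(\OO)$ once the finite image $f(\OO_f')$ of the finite complement is discarded; every other step is routine set bookkeeping.
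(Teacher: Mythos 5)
Your proof is correct and follows essentially the same route as the paper: the forward direction via Theorem \ref{injeq} together with $f(\OO) \supseteq B$, and the converse by restricting $f$ to a bijection $\OO_f \ra f(\OO_f)$ and bounding $f(\OO_f)' \subseteq (\OO \setminus f(\OO)) \cup f(\OO_f')$. No gaps; nothing further is needed.
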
 

\begin{proof} 
($\Uparrow$) Let $f$ be nearly bijective: say $f$ maps the cofinite set $A \subseteq \OO$ to the cofinite set $B \subseteq \OO$ bijectively. As $f|_A$ is injective, $f$ is a near-injection by Theorem \ref{injeq}; $f$ is a near-surjection because $f(\OO) \supseteq B$. \par
($\Downarrow$) Let $\OO_f$ and $f(\OO)$ be cofinite. Consider $f$ as a map from $\OO_f$ to $f(\OO_f)$. This map is both injective (by definition of $\OO_f$) and surjective (by definition of $f(\OO_f)$). As given, $\OO \setminus \OO_f$ is finite; $\OO \setminus f(\OO_f)$ is finite, because 
$$\OO \setminus f(\OO_f) \subseteq (\OO \setminus f(\OO)) \cup (f(\OO) \setminus f(\OO_f)) \subseteq (\OO \setminus f(\OO)) \cup f(\OO \setminus \OO_f). $$
\end{proof} 

\medbreak 

We now wish to explore the interaction between composition and the `near' and `almost' notions that we have introduced.  

\medbreak 

The set of near-surjections is closed under composition. 

\medbreak 

\begin{theorem} 
If $f: \OO \ra \OO$ and $g: \OO \ra \OO$ are near-surjections, then so is $g \circ f$. 
\end{theorem}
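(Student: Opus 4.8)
The plan is to work directly with complements of ranges. By definition, saying that $f$ and $g$ are near-surjections means that $f(\OO)'$ and $g(\OO)'$ are finite, and what I must show is that $(g \circ f)(\OO)'$ is finite. The key observation is that $(g \circ f)(\OO) = g(f(\OO))$, so I want to estimate how much of $\OO$ is missed by $g(f(\OO))$ relative to $g(\OO)$.

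First I would split the complement as
$$\OO \setminus g(f(\OO)) = (\OO \setminus g(\OO)) \cup (g(\OO) \setminus g(f(\OO))),$$
which is valid because $f(\OO) \subseteq \OO$ forces $g(f(\OO)) \subseteq g(\OO)$. The first piece is exactly $g(\OO)'$, finite by hypothesis.

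The second step is to bound the remaining piece by showing $g(\OO) \setminus g(f(\OO)) \subseteq g(f(\OO)')$. Indeed, if $\oo'$ lies in $g(\OO)$ but not in $g(f(\OO))$, then $\oo' = g(\oo)$ for some $\oo \in \OO$, and this $\oo$ cannot belong to $f(\OO)$ (else $\oo' \in g(f(\OO))$); hence $\oo \in f(\OO)'$ and so $\oo' \in g(f(\OO)')$. Since $f(\OO)'$ is finite, its image $g(f(\OO)')$ is finite as well.

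Combining the two steps, $(g \circ f)(\OO)' = \OO \setminus g(f(\OO))$ is contained in the union of the two finite sets $g(\OO)'$ and $g(f(\OO)')$, hence is finite; this is precisely the assertion that $g \circ f$ is a near-surjection. I do not anticipate any genuine obstacle here, as the argument is the same complement-chasing used in the proof of Theorem \ref{near}; the only point needing a moment's care is the inclusion $g(\OO) \setminus g(f(\OO)) \subseteq g(f(\OO)')$, which rests on the elementary fact that any value of $g$ not attained on $f(\OO)$ must be attained on its complement.
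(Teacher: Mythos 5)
Your proof is correct and is essentially identical to the paper's: the same decomposition $\OO \setminus g(f(\OO)) \subseteq (\OO \setminus g(\OO)) \cup (g(\OO) \setminus g(f(\OO)))$, with the second piece controlled by the inclusion $g(\OO) \setminus g(f(\OO)) \subseteq g(\OO \setminus f(\OO))$, which you spell out in slightly more detail. No gaps.
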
 

\begin{proof} 
Simply observe that
$$\OO \setminus g \circ f (\OO) = \OO \setminus g(f(\OO)) \subseteq (\OO \setminus g(\OO)) \cup (g(\OO) \setminus g(f(\OO)))$$
where $\OO \setminus g(\OO)$ is finite (as $g$ is a near-surjection) and $g(\OO) \setminus g(f(\OO)) \subseteq g(\OO \setminus f(\OO))$ is finite (as $f$ is a near-surjection). 
\end{proof} 

\medbreak 

The set of near-injections is closed under composition. 

\begin{theorem} \label{incom}
If $f: \OO \ra \OO$ and $g: \OO \ra \OO$ are near-injections, then so is $g \circ f$. 
\end{theorem}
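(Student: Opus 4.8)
The plan is to reduce everything to the characterization of near-injections furnished by Theorem \ref{injeq}: a self-map is a near-injection precisely when it is injective on some cofinite set. Accordingly, I would first invoke Theorem \ref{injeq} in the ``only if'' direction to obtain a cofinite set $A \subseteq \OO$ on which $f$ is injective and a cofinite set $B \subseteq \OO$ on which $g$ is injective. The goal is then to exhibit a single cofinite set on which $g \circ f$ is injective, after which one final appeal to Theorem \ref{injeq} in the ``if'' direction completes the proof.

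The natural candidate is $C = A \cap \overleftarrow{f}(B)$, the set of points of $A$ whose $f$-images land in $B$. Injectivity of $g \circ f$ on $C$ is then immediate and purely formal: if $\oo_1, \oo_2 \in C$ satisfy $g(f(\oo_1)) = g(f(\oo_2))$, then $f(\oo_1) = f(\oo_2)$ because $f(\oo_1), f(\oo_2) \in B$ and $g$ is injective on $B$; and then $\oo_1 = \oo_2$ because $\oo_1, \oo_2 \in A$ and $f$ is injective on $A$.

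It remains to check that $C$ is cofinite, and this is where the only real work lies. Since $A$ is cofinite, it suffices to show that $\overleftarrow{f}(B)$ is cofinite, equivalently that its complement $\overleftarrow{f}(B)' = \overleftarrow{f}(B')$ is finite. Here $B'$ is finite by hypothesis, so I would establish that $f$ has finite fibres: for any $\oo \in \OO$ the fibre $\overleftarrow{f}(\{ f(\oo) \})$ meets $A$ in at most one point (by injectivity of $f$ on $A$) and otherwise lies in the finite set $A'$, hence is finite. Taking the union of these finitely many finite fibres over the finite set $B'$ shows $\overleftarrow{f}(B')$ is finite, so $\overleftarrow{f}(B)$, and therefore $C = A \cap \overleftarrow{f}(B)$, is cofinite. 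The crux of the argument is thus the observation that near-injectivity forces preimages of finite sets to be finite; everything else is formal manipulation of cofinite sets. I expect this finite-fibre step to be the main obstacle, since it is the one place where the hypothesis is used in an essential (rather than bookkeeping) way.
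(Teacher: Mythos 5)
Your proof is correct and follows essentially the same route as the paper: both invoke Theorem \ref{injeq} in each direction, take the same candidate set $A \cap \overleftarrow{f}(B)$, and verify injectivity there identically. The only (cosmetic) difference is in the cofiniteness step, where you package the finiteness of $\overleftarrow{f}(B')$ as ``near-injections have finite fibres'' and take a union over $B'$, while the paper splits $\overleftarrow{f}(B')$ directly into the part inside $A'$ (finite) and the part $A \cap \overleftarrow{f}(B') = \overleftarrow{f|_A}(B')$ (finite by injectivity of $f|_A$) --- the same two ingredients, rearranged.
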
 

\begin{proof} 
By Theorem \ref{injeq} we may assume that $f$ is injective on the cofinite set $A$ and $g$ is injective on the cofinite set $B$. \par 
Claim: $g \circ f$ is injective on $A \cap \overleftarrow{f}(B)$. [Let $\oo, \oo' \in A \cap \overleftarrow{f}(B)$ satisfy $g \circ f (\oo) = g \circ f (\oo')$ so that $g(f(\oo)) = g(f(\oo'))$: as $f(\oo), f(\oo') \in B$ and $g|_B$ is injective, $f(\oo) = f(\oo')$; as $\oo, \oo' \in A$ and $f|_A$ is injective, $\oo = \oo'$.] \par 
Claim: $A \cap \overleftarrow{f}(B)$ is cofinite. [As $A'$ is finite, we need only check that $\overleftarrow{f} (B)' = \overleftarrow {f} (B')$ is finite; again, as $A'$ is finite, we need only check that $A \cap \overleftarrow{f} (B')$ is finite. This is clear: $A \cap \overleftarrow{f} (B') = \overleftarrow{f|_A} (B')$ with $f|_A$ injective and $B'$ finite.] 
\end{proof} 

\medbreak 

Of course, the next result is now immediate. 

\begin{theorem} \label{bicom}
If $f: \OO \ra \OO$ and $g: \OO \ra \OO$ are near-bijections, then so is $g \circ f$. 
\end{theorem}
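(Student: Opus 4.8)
The plan is to avoid any direct manipulation of the range and monoset of the composite $g \circ f$ and instead route everything through the characterization in Theorem \ref{near}, which decouples the two obstructions to bijectivity. Since all the substantive work has already been done in the two preceding closure results, the argument should amount to little more than assembling them.

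First I would apply Theorem \ref{near} to each hypothesis: because $f$ is a near-bijection, it is simultaneously a near-injection and a near-surjection, and likewise for $g$. Next I would invoke the two closure theorems separately. Since $f$ and $g$ are both near-surjections and the near-surjections are closed under composition, $g \circ f$ is a near-surjection; since $f$ and $g$ are both near-injections and the near-injections are closed under composition by Theorem \ref{incom}, $g \circ f$ is a near-injection. Finally I would apply Theorem \ref{near} in the reverse direction: being both nearly injective and nearly surjective, $g \circ f$ is a near-bijection, as required.

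There is essentially no obstacle here, which is precisely why the result is flagged as immediate. The only point worth flagging is structural rather than technical: a naive direct proof would have to establish that $\OO_{g \circ f}$ is cofinite and that $g \circ f (\OO)$ is cofinite at the same time, and the interaction between composition and the monoset is delicate (one cannot simply intersect monosets). The merit of Theorem \ref{near} is that it lets us verify the injective and surjective sides entirely independently, so no such simultaneous control is ever needed.
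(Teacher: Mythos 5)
Your proof is correct and is exactly the paper's argument: the paper's one-line proof (``simply combine the last two results'') is precisely your assembly of the two closure theorems for near-surjections and near-injections via the equivalence in Theorem \ref{near}. Your closing structural remark about why Theorem \ref{near} is the right intermediary is a fair observation but adds nothing the paper's route doesn't already exploit.
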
 

\begin{proof}
Simply combine the last two results. 
\end{proof} 

\medbreak 

So much for the interaction between composition and our `near' notions; now for the interaction between composition and almost equality. 

\medbreak 

Let $f_1, f_2, g_1, g_2$ be self-maps of $\OO$ such that $f_1 \equiv f_2$ and $g_1 \equiv g_2$. We wish to know whether it neccessarily follows that $g_1 \circ f_1 \equiv g_2 \circ f_2$. For convenience, let us agree to use the abbreviations $D_f = D(f_1, f_2)$, $D_g = D(g_1, g_2)$, $D_{gf} = D(g_1 \circ f_1, g_2 \circ f_2)$. With this agreement, we wish to know whether finiteness of $D_f \cup D_g$ forces that of $D_{gf}$. 

\medbreak 

With no additional assumptions, the answer to this question is negative. For example, choose $\oo_0 \in D_g$ and let $f_1|_{D_f'} = f_2|_{D_f'}$ take constant value $\oo_0$; then the disagreement set $D_{gf}$ contains the cofinite set $D_f'$. If we assume that $f_1$ and $f_2$ are near-injections, then all is well. 

\begin{theorem} \label{com}
Let $f_1, f_2, g_1, g_2$ be self-maps of $\OO$ such that $f_1 \equiv f_2$ and $g_1 \equiv g_2$. If $f_1$ and $f_2$ are near-injections, then $g_1 \circ f_1 \equiv g_2 \circ f_2$.
\end{theorem}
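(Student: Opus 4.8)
The plan is to show that the disagreement set $D_{gf} = D(g_1 \circ f_1, g_2 \circ f_2)$ is finite, by splitting $\OO$ according to whether or not a point lies in the finite set $D_f = D(f_1, f_2)$. First I would analyse the behaviour of the two composites off $D_f$. If $\oo \notin D_f$ then $f_1(\oo) = f_2(\oo)$; writing $y$ for this common value, we have $g_1 \circ f_1(\oo) = g_1(y)$ and $g_2 \circ f_2(\oo) = g_2(y)$, so these two values differ precisely when $y \in D_g$. Thus $\oo \in D_{gf}$ together with $\oo \notin D_f$ forces $f_1(\oo) \in D_g$, that is, $\oo \in \overleftarrow{f_1}(D_g)$. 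Consequently
$$D_{gf} \subseteq D_f \cup \overleftarrow{f_1}(D_g).$$
Since $D_f$ is finite by hypothesis (because $f_1 \equiv f_2$), it remains only to establish that $\overleftarrow{f_1}(D_g)$ is finite.

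This last reduction is where the near-injectivity hypothesis enters, and I expect it to be the crux of the argument. Since $g_1 \equiv g_2$, the set $D_g = D(g_1, g_2)$ is finite, so $\overleftarrow{f_1}(D_g)$ is a finite union of point-preimages $\overleftarrow{f_1}(\{ y \})$ with $y \in D_g$; it therefore suffices to see that each such preimage is finite. By Theorem \ref{injeq}, the near-injection $f_1$ is injective on some cofinite set $A$. For fixed $y$ we then write $\overleftarrow{f_1}(\{ y \}) = (\overleftarrow{f_1}(\{ y \}) \cap A) \cup (\overleftarrow{f_1}(\{ y \}) \cap A')$: the first piece contains at most one point, as $f_1|_A$ is injective, while the second is contained in the finite set $A'$. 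Hence every point-preimage under $f_1$ is finite, $\overleftarrow{f_1}(D_g)$ is finite, and the displayed inclusion delivers the finiteness of $D_{gf}$, i.e. $g_1 \circ f_1 \equiv g_2 \circ f_2$.

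Two points deserve comment. Only the near-injectivity of $f_1$ is actually invoked; the hypothesis on $f_2$ is available but not needed in this route. Moreover, the key finiteness computation — that a near-injection has finite point-preimages — is exactly the preimage estimate already carried out in the proof of Theorem \ref{incom}, so no genuinely new machinery is required beyond Theorem \ref{injeq}.
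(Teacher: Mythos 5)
Your proof is correct and takes essentially the same route as the paper's: off the finite set $D_f$ the two composites can disagree only at points whose image under the common value of $f_1$ and $f_2$ lies in $D_g$, and the finiteness of the relevant preimage of $D_g$ is extracted from Theorem \ref{injeq} by the same split along a cofinite injectivity set $A$ versus its finite complement $A'$. Your only departure is organizational — you bound the full preimage $\overleftarrow{f_1}(D_g)$ by observing that a near-injection has finite point-preimages, and you correctly note in passing that the hypothesis on $f_2$ is redundant (indeed $f_1 \equiv f_2$ with $f_1$ nearly injective already makes $f_2$ nearly injective, since $f_2$ agrees with $f_1$ on the cofinite set $A \cap D_f'$), whereas the paper works with the coincident restriction on $D_f'$ and intersects both injectivity sets $A_1 \cap A_2$.
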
 

\begin{proof} 
Abbreviate by $f: D_f' \ra \OO$ the coincident restrictions of $f_1$ and $f_2$ to the complement of their disagreement set. If $\oo \in D_f' \cap D_{gf}$ then 
$$g_1 (f(\oo)) = g_1 \circ f_1 (\oo) \neq g_2 \circ f_2 (\oo) = g_2 (f(\oo));$$
thus 
$$D_f' \cap D_{gf} \subseteq \overleftarrow{f} (D_g).$$
As $D_f$ is finite, finiteness of $D_{gf}$ is therefore an immediate consequence of the following claim. \par
Claim: $D_f' \cap \overleftarrow{f} (D_g)$ is finite. [Let $f_1$ and $f_2$ be injective on the cofinite sets $A_1$ and $A_2$; the intersection $A = A_1 \cap A_2$ is cofinite, of course. Now 
$$D_f' \cap \overleftarrow{f} (D_g) = \overleftarrow{f|_{D_f' \cap A}} (D_g) \cup \overleftarrow{f|_{D_f' \setminus A}} (D_g)$$
where the first set on the right is finite (because $f|_{D_f' \cap A}$ is injective and $D_g$ finite) and the second set on the right is finite (because it is contained in $D_f' \setminus A = D_f' \cap A' \subseteq A'$ and $A'$ is finite).]
\end{proof} 

\medbreak 

\section{Index theory} 

\medbreak 

In this section we develop an index theory for near-bijections, associating to each near-bijection $f$ an integer ${\rm ind} (f)$ that vanishes precisely when $f$ is almost bijective. 

\medbreak 

The following observation regarding any self-map of $\OO$ will be useful. It formalizes the remark following Theorem \ref{insur}. 

\begin{theorem} \label{comp}
If $f$ is any self-map of $\OO$ then
$$f(\OO \setminus \OO_f) = f(\OO) \setminus f(\OO_f);$$
in particular, if $f$ is surjective then 
$$f(\OO_f') = f(\OO_f)'.$$
\end{theorem}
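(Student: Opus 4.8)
The plan is to establish the displayed set identity by proving the two inclusions separately, and then to read off the surjective case as a one-line substitution. I expect the reverse inclusion to be the only step where the defining property of the monoset is genuinely used; the other inclusion is a purely formal fact about images of complements that holds with any subset in place of $\OO_f$.

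First I would dispose of the inclusion $f(\OO) \setminus f(\OO_f) \subseteq f(\OO \setminus \OO_f)$. Given $\oo' \in f(\OO) \setminus f(\OO_f)$, write $\oo' = f(\oo)$ for some $\oo \in \OO$; since $\oo' \notin f(\OO_f)$ the witness $\oo$ cannot lie in $\OO_f$, so $\oo \in \OO \setminus \OO_f$ and therefore $\oo' \in f(\OO \setminus \OO_f)$. This uses nothing about the monoset beyond its being a subset.

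The substance of the theorem is the reverse inclusion $f(\OO \setminus \OO_f) \subseteq f(\OO) \setminus f(\OO_f)$, and I expect this to be the main point. Here I would take $\oo' = f(\oo)$ with $\oo \in \OO \setminus \OO_f$; membership $\oo' \in f(\OO)$ is automatic, and the task is to rule out $\oo' \in f(\OO_f)$. Suppose otherwise, so that $\oo' = f(\oo_0)$ for some $\oo_0 \in \OO_f$. By the very definition of the monoset, $\overleftarrow{f}(\{f(\oo_0)\}) = \{\oo_0\}$; but $\oo$ also maps to $\oo' = f(\oo_0)$, so $\oo \in \overleftarrow{f}(\{f(\oo_0)\}) = \{\oo_0\}$, forcing $\oo = \oo_0 \in \OO_f$ and contradicting $\oo \notin \OO_f$. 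Hence $\oo' \in f(\OO) \setminus f(\OO_f)$, which closes the argument.

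Finally, the surjective clause is immediate from the identity just proved. When $f$ is surjective we have $f(\OO) = \OO$, so the right-hand side $f(\OO) \setminus f(\OO_f)$ becomes $\OO \setminus f(\OO_f) = f(\OO_f)'$, while the left-hand side is $f(\OO \setminus \OO_f) = f(\OO_f')$; combining these two readings yields $f(\OO_f') = f(\OO_f)'$ at once.
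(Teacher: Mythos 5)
Your proposal is correct and matches the paper's proof essentially step for step: both inclusions are proved by the same elementwise arguments, with the defining property $\overleftarrow{f}(\{f(\oo_0)\}) = \{\oo_0\}$ of the monoset invoked exactly where the paper invokes it, and the surjective case obtained by the same substitution $f(\OO) = \OO$. No gaps; nothing further to add.
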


\begin{proof} 
($\subseteq$) Let $\oo' \in \OO \setminus \OO_f$: if $f(\oo') \in f(\OO_f)$ then $f(\oo') = f(\oo)$ for some $\oo \in \OO_f$ so that $\oo' = \oo$ (by definition of $\OO_f$) and therefore $\oo' \in \OO_f$ (contradiction); so $f(\oo') \in f(\OO) \setminus f(\OO_f)$. \par
($\supseteq$) Let $\overline{\oo} \in f(\OO) \setminus f(\OO_f)$ and indeed let $\overline{\oo} = f(\oo)$: if $\oo \in \OO_f$ then $\overline{\oo} = f(\oo) \in f(\OO_f)$ (contradiction); thus $\oo \in \OO \setminus \OO_f$ and so $\overline{\oo} = f(\oo) \in f(\OO \setminus \OO_f)$. 
\end{proof} 

\medbreak 

{\bf Remark:} It will also be useful to notice that if $f$ is any self-map of $\OO$ then $\OO_f'$ is the disjoint union 
$$\OO_f' = \bigcupdot_{\overline{\oo} \in f(\OO_f')} \overleftarrow{f} (\{ \overline{\oo} \}).$$

\medbreak 

For the present paragraph only, we suspend our standing convention and take $\OO$ to be {\it finite}. Let $f: \OO \ra \OO$ be any map. Theorem \ref{comp} gives us the disjoint union 
$$f(\OO) = f(\OO_f) \cupdot f(\OO_f')$$
so that 
$$|f(\OO)| = |f(\OO_f)| + |f(\OO_f')| = |\OO_f| + |f(\OO_f')|$$
as $f|_{\OO_f}$ is injective; consequently 
$$|f(\OO)'| = |\OO| - |f(\OO)| = |\OO| - |\OO_f| - |f(\OO_f')| = |\OO_f'| - |f(\OO_f')|$$
and in conclusion
$$|\OO_f'| - |f(\OO_f')| = |f(\OO)'|.$$

\medbreak 

Reinstate the standing convention that $\OO$ is infinite. We extend the conclusion of the last paragraph to self-maps that are almost equal to the identity, as a convenient step towards the handling of arbitrary almost bijective self-maps. 

\begin{theorem} \label{fin}
If $f: \OO \ra \OO$ satisfies $f \equiv {\rm I}$ then 
$$|\OO_f'| - |f(\OO_f')| = |f(\OO)'|.$$
\end{theorem}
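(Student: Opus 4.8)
The plan is to reduce the claim to the finite computation carried out in the preceding paragraph, by confining all the nontrivial behaviour of $f$ to a single finite $f$-invariant set. Write $D = D(f, {\rm I})$ for the (finite) disagreement set, so that $f(\oo) = \oo$ for every $\oo \in D'$, and set $F = D \cup f(D)$, a finite subset of $\OO$.

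First I would check that $F$ is $f$-invariant, i.e. $f(F) \subseteq F$. The point is that $f$ fixes everything outside $D$: for $y \in f(D)$, either $y \in D$, whence $f(y) \in f(D)$, or $y \in D'$, whence $f(y) = y \in f(D)$. Thus $f(f(D)) \subseteq f(D)$ and hence $f(F) = f(D) \cup f(f(D)) \subseteq F$. Consequently $g := f|_F$ is a genuine self-map of the finite set $F$, to which the finite-case identity of the previous paragraph applies:
$$|F \setminus F_g| - |g(F \setminus F_g)| = |F \setminus g(F)|,$$
all complements here being taken within $F$.

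The heart of the argument is then to match the three quantities for $g$ on $F$ with the three quantities for $f$ on $\OO$. Since $f$ fixes $F'$ pointwise and $f(F) \subseteq F$, I would deduce: (a) $f(\OO) = g(F) \cupdot F'$, so that $f(\OO)' = F \setminus g(F)$; (b) each point of $F'$ is its own unique $f$-preimage, whence $F' \subseteq \OO_f$ and so $\OO_f' \subseteq F$; and (c) for $\oo \in F$ all preimages of $f(\oo)$ lie in $F$, so $\oo \in \OO_f$ iff $\oo \in F_g$, giving $\OO_f \cap F = F_g$. Combining (b) and (c) yields $\OO_f' = F \setminus F_g$, and then $f(\OO_f') = g(F \setminus F_g)$ because $\OO_f' \subseteq F$. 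Substituting (a) and these identifications into the displayed finite identity gives exactly $|\OO_f'| - |f(\OO_f')| = |f(\OO)'|$; in particular all three cardinalities are finite.

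I expect step (b)/(c) — the identification of the monoset complement $\OO_f'$ with $F \setminus F_g$ — to be the main obstacle, since this is where the global condition $\overleftarrow{f}(\{f(\oo)\}) = \{\oo\}$ over all of $\OO$ must be shown to reduce to the same condition inside $F$. Both directions rest squarely on the two structural facts that $f$ is the identity off $F$ and that $F$ is $f$-invariant, so once $f(F) \subseteq F$ is in hand the verification is a short case analysis according to whether a putative preimage lies in $F$ or in $F'$.
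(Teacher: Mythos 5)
Your proposal is correct and follows essentially the same route as the paper's own proof: the paper likewise sets $\oD = D \cup f(D)$, verifies $f(\oD) \subseteq \oD$, identifies $f(\OO)' = \oD \setminus g(\oD)$ and $\OO_f' = \oD \setminus \oD_g$ (hence $f(\OO_f') = g(\oD \setminus \oD_g)$) via the same case analysis on whether putative preimages lie in $\oD$ or its complement, and then invokes the finite-case identity. Your packaging of the monoset identification into steps (b) and (c) is a clean reorganization of the paper's argument, not a different method.
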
 

\begin{proof} 
Let $f$ and the identity map ${\rm I}$ have finite disagreement set $D$. Define $\oD := D \cup f(D)$. Let $\oo \in \oD$: if $\oo \in D$ then $f(\oo) \in f(D) \subseteq \oD$; if $\oo \notin D$ then $f(\oo) = \oo \in \oD$. Thus $f$ maps $\oD$ to itself. Let us write $g$ for $f$ as a map from $\oD$ to $\oD$. 

Now 
$$f(\OO) = f(\oD) \cup f(\OO \setminus \oD) = g(\oD) \cup (\OO \setminus \oD)$$
as $f$ fixes points of $\OO \setminus D \supseteq \OO \setminus \oD$; hence 
$$\OO \setminus f(\OO) = (\OO \setminus g(\oD)) \cap \oD = \oD \setminus g(\oD)$$
and in short 
$$\OO \setminus f(\OO) = \oD \setminus g(\oD).$$

We claim that  
$$\OO \setminus \OO_f = \oD \setminus \oD_g.$$ 
The inclusion $\supseteq$ is easy. Let $\oo \in \oD \setminus \oD_g$: by definition, there exists $\oo' \neq \oo$ in $\oD$ such that $g(\oo') = g(\oo)$; of course, as $g = f|_{\oD}$ it follows that $f(\oo') = f(\oo)$. Conclusion: $\oo \in \OO \setminus \OO_f$. The inclusion $\subseteq$ is a little less easy. Let $\oo \in \OO \setminus \OO_f$; by definition, there exists $\oo' \neq \oo$ in $\OO$ such that $f(\oo') = f(\oo)$. Consider the placement of $\oo$ and $\oo'$. If both were to lie in $\OO \setminus \oD$ then $f$ would fix both and render them equal; so this case does not arise. Suppose $\oo \in \oD$ and $\oo' \in \OO \setminus \oD$: then $f$ fixes $\oo'$ and therefore $\OO \setminus \oD \ni \oo' = f(\oo') = f(\oo) \in \oD$ (contradiction); so this case also does not arise. The case $\oo \in \OO \setminus \oD$ and $\oo' \in \oD$ is ruled out likewise. We deduce that $\oD$ contains both $\oo$ and $\oo'$ and conclude that $\oo \in \oD \setminus \oD_g$ by definition. 

As an immediate consequence, also 
$$f(\OO \setminus \OO_f) = g(\oD \setminus \oD_g).$$

From the finite case handled just prior to the theorem, 
$$|\oD \setminus \oD_g| - |g(\oD \setminus \oD_g)| = |\oD \setminus g(\oD)|$$
whence 
$$|\OO \setminus \OO_f| - |f(\OO \setminus \OO_f)| = |\OO \setminus f(\OO)|.$$
\end{proof} 

\medbreak 

In order to extend this result to arbitrary almost bijective self-maps, we analyze the effect on range and monoset of composition with permutations (that is, true self-bijections). As is customary, we shall write $S_{\OO}$ for the symmetric group comprising all permutations of $\OO$. Only one of the next pair of theorems will be used at once; both will come into play later. 

\medbreak 

On the one side the effect is as follows. 

\begin{theorem} \label{left}
If $\pi \in S_{\OO}$ and if $f$ is any self-map of $\OO$ then 
$$\pi \circ f (\OO) = \pi (f(\OO)), \; \; \;  \OO_{\pi \circ f} = \OO_f.$$ 
\end{theorem}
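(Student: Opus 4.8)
The plan is to treat the two assertions separately, noting that the range identity is a purely set-theoretic fact requiring nothing about $\pi$, while the monoset identity exploits only the injectivity of $\pi$.

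For the range identity I would simply unwind the definitions:
$$\pi \circ f (\OO) = \{ \pi(f(\oo)) : \oo \in \OO \} = \pi(\{ f(\oo) : \oo \in \OO \}) = \pi(f(\OO)).$$
This needs no hypothesis on $\pi$ beyond its being a map, and indeed holds for any two composable functions.

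For the monoset identity, the key observation is that composing on the left with an injection does not alter the fibers of the map; it merely relabels the points of the target. Concretely, I would fix $\oo \in \OO$ and show that
$$\overleftarrow{\pi \circ f} (\{ \pi \circ f (\oo) \}) = \overleftarrow{f} (\{ f(\oo) \}).$$
Indeed, for any $\oo' \in \OO$ we have $\pi \circ f (\oo') = \pi \circ f (\oo)$ iff $\pi(f(\oo')) = \pi(f(\oo))$ iff $f(\oo') = f(\oo)$, the final equivalence being exactly where the injectivity of $\pi$ enters. Hence the two preimage sets coincide.

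With this fiber identity in hand the monoset equality is immediate: for each $\oo$ we have $\oo \in \OO_{\pi \circ f}$ iff $\overleftarrow{\pi \circ f}(\{ \pi \circ f(\oo) \}) = \{ \oo \}$ iff $\overleftarrow{f}(\{ f(\oo) \}) = \{ \oo \}$ iff $\oo \in \OO_f$, so the sets $\OO_{\pi \circ f}$ and $\OO_f$ are equal. I expect no genuine obstacle here; the only point meriting care is to notice that only the injectivity of $\pi$ (not its surjectivity) is used, which is why the same argument will not symmetrize to right-composition and explains the need for a separate treatment in the companion theorem.
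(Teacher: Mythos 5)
Your proof is correct and takes essentially the same route as the paper: the range identity by unwinding definitions (valid for any self-map $\pi$), and the monoset identity by cancelling the injective $\pi$, which is exactly the content of the paper's two-inclusion element chase, merely repackaged as a single fiber equality $\overleftarrow{\pi \circ f}(\{\pi \circ f(\oo)\}) = \overleftarrow{f}(\{f(\oo)\})$. Your closing observation that only injectivity is used, so the argument does not transfer to right-composition, matches the paper's separate treatment in the companion theorem.
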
 

\begin{proof} 
Write $g = \pi \circ f$ for short. The first equality holds clearly when $\pi$ is any self-map of $\OO$. Let $\oo \in \OO_f$: if $\oo' \in \overleftarrow{g} (\{g(\oo)\})$ then $\pi (f(\oo')) = g(\oo') = g(\oo) = \pi (f(\oo))$ so $f(\oo') = f(\oo)$ and therefore $\oo' = \oo$; this places $\oo$ in $\OO_g$. Let $\oo \in \OO_g$: if $\oo' \in \overleftarrow{f} (\{f(\oo)\})$ then $g(\oo') = \pi (f(\oo')) = \pi (f(\oo)) = g(\oo)$ and therefore $\oo' =\oo$; this places $\oo$ in $\OO_f$. 
\end{proof} 

\medbreak 

The effect on the other side is as follows. 

\medbreak 

\begin{theorem} \label{right}
If $\pi \in S_{\OO}$ and if $f$ is any self-map of $\OO$ then 
$$f \circ \pi (\OO) = f(\OO), \; \; \; \OO_{f \circ \pi} = \overleftarrow{\pi} (\OO_f).$$
\end{theorem}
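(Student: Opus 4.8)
The plan is to treat the two equalities separately, dispatching the range identity in one line and reserving the real work for the monoset identity. Since $\pi$ is a permutation it is in particular surjective, so $\pi(\OO) = \OO$ and hence $f \circ \pi(\OO) = f(\pi(\OO)) = f(\OO)$; this uses only that $\pi$ is onto, and indeed holds whenever $\pi$ is surjective. For the monoset identity $\OO_{f \circ \pi} = \overleftarrow{\pi}(\OO_f)$ I would imitate the element-chasing argument of Theorem \ref{left}, writing $g = f \circ \pi$ for brevity and proving the two inclusions in turn. The recurring mechanism is that unwinding $g(\oo') = g(\oo)$ amounts to $f(\pi(\oo')) = f(\pi(\oo))$, so that membership in the fibre $\overleftarrow{g}(\{g(\oo)\})$ translates, via $\pi$, into membership in the fibre $\overleftarrow{f}(\{f(\pi(\oo))\})$.

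For the inclusion $\OO_g \subseteq \overleftarrow{\pi}(\OO_f)$, I would take $\oo \in \OO_g$ and aim to show $\pi(\oo) \in \OO_f$, i.e.\ that the fibre $\overleftarrow{f}(\{f(\pi(\oo))\})$ is the singleton $\{\pi(\oo)\}$. Given any $\oo'$ in that fibre, surjectivity of $\pi$ lets me write $\oo' = \pi(\oo'')$; then $g(\oo'') = f(\pi(\oo'')) = f(\oo') = f(\pi(\oo)) = g(\oo)$, so $\oo'' \in \overleftarrow{g}(\{g(\oo)\}) = \{\oo\}$ forces $\oo'' = \oo$ and hence $\oo' = \pi(\oo)$. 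For the reverse inclusion, I would take $\oo$ with $\pi(\oo) \in \OO_f$ and check $\oo \in \OO_g$: if $\oo' \in \overleftarrow{g}(\{g(\oo)\})$ then $f(\pi(\oo')) = f(\pi(\oo))$, so $\pi(\oo')$ lies in $\overleftarrow{f}(\{f(\pi(\oo))\}) = \{\pi(\oo)\}$, whence $\pi(\oo') = \pi(\oo)$ and injectivity of $\pi$ yields $\oo' = \oo$.

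The main obstacle — really the only point demanding care — is to keep the two halves of the bijectivity of $\pi$ in their correct roles: surjectivity is what allows an arbitrary element of the $f$-fibre to be pulled back through $\pi$ in the first inclusion, while injectivity is what collapses $\pi(\oo') = \pi(\oo)$ to $\oo' = \oo$ in the second. Once those roles are fixed, the rest is a routine transcription of the definitions of monoset and preimage, exactly parallel to Theorem \ref{left}.
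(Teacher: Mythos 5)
Your proof is correct and takes essentially the same approach as the paper's: the range equality from surjectivity of $\pi$ alone, and the two monoset inclusions by element-chasing through $g = f \circ \pi$, with surjectivity of $\pi$ pulling fibre elements of $f$ back and injectivity collapsing $\pi(\oo') = \pi(\oo)$ to $\oo' = \oo$. If anything, you make explicit a point the paper leaves implicit in its notation, namely that an arbitrary element of $\overleftarrow{f}(\{f(\pi(\oo))\})$ may be written as $\pi(\oo'')$ because $\pi$ is onto.
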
 

\begin{proof} 
Write $g = f \circ \pi$ for short. The first equality holds because $\pi$ is surjective. Let $\oo \in \overleftarrow{\pi} (\OO_f)$ so $\pi (\oo) \in \OO_f$: if $\oo' \in \overleftarrow{g} (\{ g(\oo) \})$ then $f( \pi(\oo')) = g(\oo') = g(\oo) = f( \pi(\oo))$ so $\pi(\oo') = \pi(\oo)$ and therefore $\oo' = \oo$; this places $\oo$ in $\OO_g$. Let $\oo \in \OO_g$: if $\pi(\oo') \in \overleftarrow{f} (\{f(\pi(\oo))\})$ then $g(\oo') = f(\pi(\oo')) = f(\pi(\oo)) = g(\oo)$ so $\oo' = \oo$ and therefore $\pi(\oo') = \pi(\oo)$; this places $\oo$ in $\overleftarrow{\pi} (\OO_f)$. 
\end{proof} 

\medbreak 

We are now positioned to extend Theorem \ref{fin} to arbitrary almost bijective self-maps. 

\begin{theorem} \label{zero} 
If $f$ is an almost bijective self-map of $\OO$ then 
$$|\OO_f'| - |f(\OO_f')| = |f(\OO)'|.$$
\end{theorem}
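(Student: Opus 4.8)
The plan is to reduce the general almost bijective case to the special case $f \equiv {\rm I}$ already handled in Theorem \ref{fin}, by composing with a suitable permutation on the left. Since $f$ is almost bijective, there is a permutation $\pi \in S_{\OO}$ with $f \equiv \pi$. First I would form $h = \pi^{-1} \circ f$ and observe that $h \equiv {\rm I}$: if $f$ and $\pi$ agree on a cofinite set, then applying the fixed function $\pi^{-1}$ on the left preserves that agreement, so $\pi^{-1} \circ f$ and $\pi^{-1} \circ \pi = {\rm I}$ coincide on the very same cofinite set. Hence Theorem \ref{fin} applies to $h$ and gives
$$|\OO_h'| - |h(\OO_h')| = |h(\OO)'|.$$

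Next I would transport each of the three quantities attached to $h$ back to the corresponding quantity for $f$, using Theorem \ref{left} with $\pi^{-1}$ in the role of $\pi$. That theorem yields $\OO_h = \OO_{\pi^{-1} \circ f} = \OO_f$, so the monosets, and with them their complements, coincide exactly; it also yields $h(\OO) = \pi^{-1}(f(\OO))$, whence $h(\OO)' = \pi^{-1}(f(\OO)')$ because $\pi^{-1}$ is a bijection. Applying the bijection $\pi^{-1}$ likewise carries $f(\OO_f') = f(\OO_h')$ onto $h(\OO_h') = \pi^{-1}(f(\OO_f'))$. As a bijection preserves the cardinality of finite sets, this gives the three identities $|\OO_h'| = |\OO_f'|$, $|h(\OO)'| = |f(\OO)'|$, and $|h(\OO_h')| = |f(\OO_f')|$.

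Substituting these three equalities into the displayed identity for $h$ then produces precisely
$$|\OO_f'| - |f(\OO_f')| = |f(\OO)'|,$$
as desired.

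I do not anticipate a genuine obstacle here; the only point that demands care is the bookkeeping in the transport step, namely confirming that left composition by $\pi^{-1}$ leaves the monoset untouched (this is exactly the content of $\OO_{\pi \circ f} = \OO_f$ in Theorem \ref{left}) while acting bijectively on ranges, so that all three finite cardinalities transfer unchanged. Once this invariance under left composition by permutations is secured, the reduction to Theorem \ref{fin} is immediate.
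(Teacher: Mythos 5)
Your proof is correct and is essentially the paper's own argument in mirror image: the paper composes on the right, using $f \circ g^{-1} \equiv {\rm I}$ together with Theorem \ref{fin} and Theorem \ref{right}, while you compose on the left via $h = \pi^{-1} \circ f \equiv {\rm I}$ and Theorem \ref{left} --- precisely the variant the paper itself notes in the Remark following its proof. Your transport steps (the invariance $\OO_{\pi^{-1} \circ f} = \OO_f$, the identity $h(\OO)' = \pi^{-1}(f(\OO)')$, and preservation of finite cardinalities under the bijection $\pi^{-1}$) are all handled correctly.
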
 

\begin{proof} 
Let $g: \OO \ra \OO$ be a bijection such that $f \equiv g$; it follows that $f \circ g^{-1} \equiv {\rm I}$. Theorem \ref{fin} provides us with the equality 
$$|\OO_{f \circ g^{-1}}'| = |(f \circ g^{-1}) (\OO_{f \circ g^{-1}}')| + |(f \circ g^{-1}) (\OO)'|$$
 while Theorem \ref{right} and the taking of complements show that 
$$|\OO_{f \circ g^{-1}}'| = |g(\OO_f')| = |\OO_f'|, \; \; \; (f \circ g^{-1}) (\OO_{f \circ g^{-1}}') = f(\OO_f'), \; \; \; (f \circ g^{-1}) (\OO)' = f(\OO)'$$
and the proof is complete.
\end{proof} 

\medbreak 

{\bf Remark:} In the proof, we may instead argue from $g^{-1} \circ f \equiv {\rm I}$ and apply Theorem \ref{left}. 

\medbreak 

Before moving on, we pause to note one immediate consequence: the familiar fact that injectivity and surjectivity are equivalent for self-maps of a finite set continues to hold true for almost bijective self-maps in general. 

\medbreak 

\begin{theorem} 
For an almost bijective map $f: \OO \ra \OO$ the following conditions are equivalent: \par 
$\bullet$ $f$ is injective, \par 
$\bullet$ $f$ is surjective;\\
they say that $f$ is a bijection.
\end{theorem}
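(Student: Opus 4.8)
The plan is to read everything off the cardinality identity
$$|\OO_f'| - |f(\OO_f')| = |f(\OO)'|$$
supplied by Theorem \ref{zero}, together with the observations (recorded just after the definition of the monoset) that $f$ is injective iff $\OO_f' = \emptyset$ and surjective iff $f(\OO)' = \emptyset$. Since an almost bijective $f$ is a near-bijection by Theorem \ref{almostnear}, both $\OO_f'$ and $f(\OO)'$ are finite, so every cardinal appearing above is a genuine non-negative integer and the identity may be manipulated freely.

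The implication ``injective $\Rightarrow$ surjective'' is then immediate. If $f$ is injective then $\OO_f' = \emptyset$, so the left-hand side of the identity vanishes (we have $|\OO_f'| = 0$ and $|f(\OO_f')| = |f(\emptyset)| = 0$), forcing $|f(\OO)'| = 0$; hence $f(\OO)' = \emptyset$ and $f$ is surjective.

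For the reverse implication I would exploit the fibre decomposition recorded in the Remark following Theorem \ref{comp}, namely
$$\OO_f' = \bigcupdot_{\overline{\oo} \in f(\OO_f')} \overleftarrow{f} (\{ \overline{\oo} \}).$$
By the very definition of $\OO_f$, each point of $\OO_f'$ lies in a fibre of $f$ containing at least two points, so every block $\overleftarrow{f}(\{\overline{\oo}\})$ occurring on the right has at least two elements; summing over the (finitely many) blocks yields the doubling inequality $|\OO_f'| \geq 2\,|f(\OO_f')|$. Now suppose $f$ is surjective, so $f(\OO)' = \emptyset$ and the identity collapses to $|\OO_f'| = |f(\OO_f')|$. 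Combined with $|\OO_f'| \geq 2\,|f(\OO_f')|$ this gives $|f(\OO_f')| = 0$, whence $f(\OO_f') = \emptyset$ and therefore $\OO_f' = \emptyset$; thus $f$ is injective.

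The main (indeed only) point requiring thought is this reverse direction, since surjectivity alone does not obviously preclude collisions; the resolution is precisely the doubling inequality $|\OO_f'| \geq 2\,|f(\OO_f')|$, which turns the equality $|\OO_f'| = |f(\OO_f')|$ into a contradiction unless both sides vanish. With both implications established, a self-map that is simultaneously injective and surjective is by definition a bijection, which delivers the final clause of the statement.
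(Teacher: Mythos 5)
Your proposal is correct and follows essentially the same route as the paper: the forward direction reads $|f(\OO)'| = 0$ off the identity of Theorem \ref{zero} with $\OO_f' = \emptyset$, and the reverse direction reduces to the observation that a nonempty $\OO_f'$ would strictly shrink under $f$, contradicting $|\OO_f'| = |f(\OO_f')|$. Your doubling inequality $|\OO_f'| \geqslant 2\,|f(\OO_f')|$ is just a quantitative rendering of the paper's remark that ``the finite set $\OO_f'$ would shrink under $f$ if it were nonempty,'' both resting on the same fibre decomposition from the Remark after Theorem \ref{comp}.
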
 

\begin{proof} 
($\Downarrow$) If $f$ is injective then $\OO_f' = \emptyset$ and $f(\OO_f') = \emptyset$ so that $|f(\OO)'| = 0$ and $f$ is surjective. \par
($\Uparrow$) Let $f$ be surjective: then $f(\OO)' = \emptyset$ so that $|\OO_f'| = |f(\OO_f')|$; as the finite set $\OO_f'$ would shrink under $f$ if it were nonempty, we deduce that $\OO_f' = \emptyset$ and conclude that $f$ is injective. 
\end{proof} 

\medbreak 

We are prompted to define a notion of `index' for arbitrary near-bijections. Note that if the self-map $f$ of $\OO$ is a near-bijection, then $\OO_f'$ and $f(\OO)'$ are finite, so the following definition makes sense. 

\medbreak 

{\bf Definition:} The {\it index} of the near-bijection $f: \OO \ra \OO$ is the integer ${\rm ind} (f) \in \mathbb{Z}$ defined by 
$${\rm ind} (f) := (|\OO_f'| - |f(\OO_f')|) - |f(\OO)'|.$$

\medbreak 

The motivating Theorem \ref{zero} informs us that the index of an almost bijective self-map is zero; in fact the converse holds, so that vanishing of the index precisely characterizes almost bijectivity. 

\medbreak 

\begin{theorem} \label{indzero}
The near-bijection $f: \OO \ra \OO$ is almost bijective iff ${\rm ind} (f) = 0.$
\end{theorem}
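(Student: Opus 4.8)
The forward implication is already settled: Theorem \ref{zero} asserts that every almost bijective self-map has index zero, so only the converse needs proof. The plan is to show directly that a near-bijection $f$ with ${\rm ind}(f) = 0$ can be turned into a genuine bijection by altering its values only on the finite set $\OO_f'$, whence $f$ is almost equal to a bijection.

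First I would record the two decompositions that near-bijectivity supplies. On the domain side we have $\OO = \OO_f \cupdot \OO_f'$, and by definition of the monoset the restriction $\OO_f \xrightarrow{f} f(\OO_f)$ is a bijection. On the codomain side Theorem \ref{comp} gives the disjoint union $f(\OO) = f(\OO_f) \cupdot f(\OO_f')$, and since $f(\OO)' = \OO \setminus f(\OO)$ this refines to
$$\OO = f(\OO_f) \cupdot \big( f(\OO_f') \cupdot f(\OO)' \big).$$
Thus $f$ already maps the cofinite set $\OO_f$ bijectively onto $f(\OO_f)$, and the only target points left unaccounted for lie in the finite set $T := f(\OO_f') \cupdot f(\OO)'$.

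Next I would invoke the hypothesis ${\rm ind}(f) = 0$. Because $f(\OO_f')$ and $f(\OO)'$ are disjoint and finite, $|T| = |f(\OO_f')| + |f(\OO)'|$, while index zero says exactly that $|\OO_f'| - |f(\OO_f')| = |f(\OO)'|$, i.e. $|\OO_f'| = |f(\OO_f')| + |f(\OO)'| = |T|$. As $\OO_f'$ and $T$ are finite sets of equal cardinality, choose any bijection $h \colon \OO_f' \ra T$, and define $g \colon \OO \ra \OO$ by letting $g$ agree with $f$ on $\OO_f$ and with $h$ on $\OO_f'$. Since $\OO_f$ maps bijectively onto $f(\OO_f)$ and $\OO_f'$ maps bijectively onto the complementary block $T$, with both domain and codomain split as the matching disjoint unions, $g$ is a bijection of $\OO$; and $g$ agrees with $f$ on the cofinite set $\OO_f$, so $g \equiv f$ and $f$ is almost bijective.

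I do not expect a serious obstacle here: the single place that genuinely consumes the index hypothesis is the cardinality match $|\OO_f'| = |T|$, which is precisely what licenses the finite bijection $h$. The only point requiring care is the verification that gluing $f|_{\OO_f}$ to $h$ yields a \emph{global} bijection, and this rests entirely on the two displays above being honest disjoint unions — the domain split being immediate and the codomain split being furnished by Theorem \ref{comp}.
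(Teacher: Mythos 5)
Your proof is correct and takes essentially the paper's own route: both arguments perform finite surgery on $f$, consuming the hypothesis ${\rm ind}(f) = 0$ exactly once as a cardinality match, with Theorem \ref{comp} supplying the disjoint decomposition $\OO = f(\OO_f) \cupdot f(\OO_f') \cupdot f(\OO)'$ that makes the glued map a global bijection. The only (harmless) difference is one of bookkeeping: the paper keeps one representative per fiber of $\OO_f'$ and redefines $f$ only at the $|\OO_f'| - |f(\OO_f')|$ removed points, mapping them onto $f(\OO)'$, whereas you redefine $f$ on all of $\OO_f'$ at once via a bijection onto $f(\OO_f') \cupdot f(\OO)'$ --- a slightly coarser surgery that bypasses the fiber decomposition recalled in the Remark after Theorem \ref{comp} and is equally valid.
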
 

\begin{proof} 
Assume that ${\rm ind} (f) = 0$; that is, assume the equality 
$$|\OO_f'| - |f(\OO_f')| = |f(\OO)'|.$$
Recall from the Remark after Theorem \ref{comp} that 
$$\OO_f' = \bigcup_{\overline{\oo} \in f(\OO_f')} \overleftarrow{f} (\{ \overline{\oo} \}).$$
For each $\overline{\oo} \in f(\OO_f')$ remove all but one element from $\overleftarrow{f} (\{ \overline{\oo} \})$; the points so removed are precisely $|\OO_f'| - |f(\OO_f')| =  |f(\OO)'|$ in number. Redefine $f$ at each of these points so that the new values of $f$ there make up $f(\OO)'$. In the process, we `empty' $\OO_f'$ and `fill' $f(\OO)'$; the resulting function $g: \OO \ra \OO$ is a bijection that agrees with $f$ on $\OO_f$. 
\end{proof} 

\medbreak 

The index is insensitive to changes on a finite set. 

\begin{theorem} \label{insensitive}
Let $f: \OO \ra \OO$ and $g: \OO \ra \OO$ be near-bijections. If $f \equiv g$ then ${\rm ind} (f) = {\rm ind} (g)$.  
\end{theorem}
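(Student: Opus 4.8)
The plan is first to recast the index in a form that localizes cleanly, and then to reduce to a change at a single point. From the Remark after Theorem \ref{comp}, $\OO_f'$ is the disjoint union of the fibres $\overleftarrow{f}(\{ \overline{\oo} \})$ over $\overline{\oo} \in f(\OO_f')$, so on counting, $|\OO_f'| - |f(\OO_f')| = \sum_{\overline{\oo} \in f(\OO_f')} (|\overleftarrow{f}(\{ \overline{\oo} \})| - 1)$. Now $f(\OO_f')$ is exactly the set of values whose fibre has two or more elements, and values with singleton fibre contribute $0$; moreover every fibre of a near-bijection is finite (a fibre with two or more elements lies inside the finite set $\OO_f'$) and all but finitely many are singletons. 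Hence, setting $M(f) := \sum_{v \in f(\OO)} (|\overleftarrow{f}(\{ v \})| - 1)$ (a finite sum), we have $|\OO_f'| - |f(\OO_f')| = M(f)$ and therefore ${\rm ind}(f) = M(f) - |f(\OO)'|$: the index is the injectivity defect $M(f)$ minus the surjectivity defect $|f(\OO)'|$.

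Next I would reduce to the case $|D(f, g)| = 1$. Near-bijectivity is preserved under almost equality: if $h \equiv f$ then the inclusion $h(\OO) \supseteq f(\OO) \setminus f(D(f, h))$ shows $h$ is a near-surjection, while $h$ agrees with $f$ on the cofinite set $\OO_f \cap D(f, h)'$, on which $f$ is injective, so $h$ is a near-injection by Theorem \ref{injeq}; thus $h$ is a near-bijection by Theorem \ref{near}. Enumerating $D(f, g) = \{ p_1, \dots, p_k \}$, I interpolate by the maps $h_i$ that agree with $g$ on $\{ p_1, \dots, p_i \}$ and with $f$ elsewhere; each $h_i \equiv f$ is then a near-bijection, and $h_{i-1}, h_i$ differ only at $p_i$. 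By transitivity it suffices to prove ${\rm ind}(h_{i-1}) = {\rm ind}(h_i)$, that is, to treat a single-point change.

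So suppose $f$ and $g$ agree off one point $p$, say $f(p) = a$ and $g(p) = b$ with $a \neq b$. The only fibres affected are those over $a$ (which loses $p$) and over $b$ (which gains $p$); write $\alpha = |\overleftarrow{f}(\{ a \})| \geq 1$ and $\beta = |\overleftarrow{f}(\{ b \})| \geq 0$. For the surjectivity defect, $a$ becomes newly missed precisely when $\alpha = 1$ and $b$ becomes newly hit precisely when $\beta = 0$, so $|g(\OO)'| - |f(\OO)'|$ is $+1$ in the first case, $-1$ in the second, and has no other contributions. For the injectivity defect, the $a$-term of $M$ drops by $1$ precisely when $\alpha \geq 2$ and the $b$-term rises by $1$ precisely when $\beta \geq 1$; since $\alpha \geq 1$ and $\beta \geq 0$, the first is $-1$ unless $\alpha = 1$ and the second is $+1$ unless $\beta = 0$, so $M(g) - M(f)$ is $+1$ when $\alpha = 1$ and $-1$ when $\beta = 0$, matching the change in the surjectivity defect exactly. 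Hence $M$ and the surjectivity defect move together and their difference ${\rm ind}$ is unchanged.

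The crux, and the one place demanding care, is this single-point bookkeeping: one must pin down how both the monoset (through fibre sizes) and the range complement respond to a lone reassignment, across the boundary cases $\alpha = 1$ versus $\alpha \geq 2$ and $\beta = 0$ versus $\beta \geq 1$. The reformulation ${\rm ind}(f) = M(f) - |f(\OO)'|$ is precisely what tames this, splitting the index into two defects whose changes are seen to coincide and cancel; a direct term-by-term comparison of $|\OO_f'|$, $|f(\OO_f')|$ and $|f(\OO)'|$ would be markedly more cumbersome.
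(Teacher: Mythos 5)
Your proposal is correct, and although it shares the paper's overall skeleton --- interpolate between $f$ and $g$ through maps differing at a single point, then do bookkeeping at that point --- the bookkeeping itself is carried out by a genuinely different device. The paper tracks the three quantities $|\OO_f'|$, $|f(\OO_f')|$, $|f(\OO)'|$ directly and disposes of a one-point change through four cases \textbf{(i)}--\textbf{(iv)} with further subcases according to whether $|\overleftarrow{f}(\{f(\oo_0)\})|$ equals or exceeds $2$ and whether the auxiliary point $\overline{\oo}$ lies in $\OO_f$ or $\OO_f'$. Your reformulation ${\rm ind}(f) = M(f) - |f(\OO)'|$, where $M(f) = \sum_{v \in f(\OO)} \bigl( |\overleftarrow{f}(\{v\})| - 1 \bigr)$ is obtained by counting the disjoint fibre decomposition of $\OO_f'$ in the Remark after Theorem \ref{comp} (your identification of $f(\OO_f')$ as exactly the set of values with fibre of size at least two is right), collapses all of this: a reassignment at $p$ from $a$ to $b$ touches only the two fibres over $a$ and $b$, and with $\alpha = |\overleftarrow{f}(\{a\})|$, $\beta = |\overleftarrow{f}(\{b\})|$ both $M$ and $|f(\OO)'|$ change by the same amount (namely $+1$ exactly when $\alpha = 1$, $-1$ exactly when $\beta = 0$, additively, so all four boundary combinations are covered at once); your cases $\alpha = 1$ versus $\alpha \geqslant 2$ and $\beta = 0$ versus $\beta \geqslant 1$ are precisely the paper's $\oo_0 \in \OO_f$ versus $\oo_0 \in \OO_f'$ and $\oo_1 \in f(\OO)'$ versus $\oo_1 \in f(\OO)$, but the fibre-count formulation makes the subcases on fibre size $2$ versus greater than $2$ evaporate. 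Your route buys two further things: the defect formula ${\rm ind} = M - |f(\OO)'|$ is of independent use (it is implicitly what drives the constructions in Theorem \ref{indzero} and Theorem \ref{red}), and your explicit check that any $h \equiv f$ is again a near-bijection --- via $h(\OO) \supseteq f(\OO) \setminus f(D(f,h))$ together with Theorems \ref{injeq} and \ref{near} --- closes a step the paper's induction leaves tacit, since the intermediate maps must be near-bijections for their indices to be defined at all. What the paper's direct comparison buys in exchange is only that it manipulates the sets in the definition of the index without any re-summation; your version is shorter and less error-prone.
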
 

\begin{proof} 
By induction on the cardinality of the difference set, we may assume that $f$ and $g$ differ at just one point: say $g(\oo_0) = \oo_1 \neq f(\oo_0)$ but $g(\oo) = f(\oo)$ whenever $\oo \neq \oo_0$. We consider separately the cases $\oo_1 \in f(\OO)'$ and $\oo_1 \in f(\OO)$. \par 

Suppose that $\oo_1 \in f(\OO)'$. Either {\bf (i)} $\oo_0 \in \OO_f$ or {\bf (ii)} $\oo_0 \in \OO_f'$. {\bf (i)} If $\oo_0 \in \OO_f$ then passage from $f$ to $g$ loses $f(\oo_0)$ as a value but gains $\oo_1$; thus $|g(\OO)'| = |f(\OO)'|$ and $\OO_g' = \OO_f'$ so the index is unchanged. {\bf (ii)} If $\oo_0 \in \OO_f'$ then $f(\oo_0)$ is still a value and $\oo_1$ is gained, so $|g(\OO)'| = |f(\OO)'| - 1$; at the same time, either $|\overleftarrow{f} (\{ f(\oo_0) \})| = 2$ (in which case $|\OO_f'|$ falls by $2$ and $|f(\OO_f')|$ falls by $1$) or $|\overleftarrow{f} (\{ f(\oo_0) \})| > 2$ (in which case $|\OO_f'|$ falls by $1$ and $|f(\OO_f')|$ is unchanged) so that $|\OO_g'| - |g(\OO_g')| = |\OO_f'| - |f(\OO_f')| - 1$. Again, the index is unchanged. \par 
Suppose that $\oo_1 \in f(\OO)$: say $f(\overline{\oo}) = \oo_1 \neq f(\oo_0)$; of course, $\overline{\oo} \neq \oo_0$. Again, either {\bf (iii)} $\oo_0 \in \OO_f$ or {\bf (iv)} $\oo_0 \in \OO_f'$. {\bf (iii)} If $\oo_0 \in \OO_f$ then $f(\oo_0)$ is lost as a value but $\oo_1$ is not gained, so $|f(\OO)'|$ is increased by one. At the same time, either $\overline{\oo} \in \OO_f$ (so that $\oo_0$ and $\overline{\oo}$ are added to $\OO_f'$ and $f(\OO_f')$ picks up one element) or $\overline{\oo} \in \OO_f'$ (so that $\OO_f'$ picks up one element and $f(\OO_f')$ is unchanged); in each case, $|\OO_f'| - |f(\OO_f')|$ is increased by one. Once again, the index is unchanged. {\bf (iv)} If $\oo_0 \in \OO_f'$ then $f(\oo_0)$ remains a value and $\oo_1$ is not gained, so $|f(\OO)'|$ is unchanged. We again consider the cases $|\overleftarrow{f} (\{ f(\oo_0) \})| = 2$ and $|\overleftarrow{f} (\{ f(\oo_0) \})| > 2$ separately. In case $|\overleftarrow{f} (\{ f(\oo_0) \})| = 2$, say $\overleftarrow{f} (\{ f(\oo_0) \}) = \{ \oo_0, \oo_0' \}$: either $\overline{\oo} \in \OO_f$ (when $\OO_f'$ loses $\oo_0'$ but gains $\overline{\oo}$ (and $\oo_0$ is retained but with switched value) while $f(\OO_f')$ loses $f(\oo_0)$ but gains $\oo_1$) or $\overline{\oo} \in \OO_f'$ (when $\OO_f'$ loses $\oo_0'$ while $f(\OO_f')$ loses $f(\oo_0') = f(\oo_0)$); either way, $|\OO_f'| - |f(\OO_f')|$ is unchanged. In case $|\overleftarrow{f} (\{ f(\oo_0) \})| > 2$, either $\overline{\oo} \in \OO_f$ (when $\OO_f'$ picks up $\overline{\oo}$ while $f(\OO_f')$ picks up $\oo_1$) or $\overline{\oo} \in \OO_f'$ (when $\OO_f'$ and $f(\OO_f')$ are unchanged); either way,  $|\OO_f'| - |f(\OO_f')|$ is unchanged. Finally, the index is unchanged once more. \par 
We have now considered all cases; the proof is complete. 
\end{proof} 

\medbreak 

{\bf Remark:} This provides an alternative route to the results of Theorem \ref{in} and Theorem \ref{sur} when the map $g$ considered there is a near-bijection: in Theorem \ref{in}, ${\rm ind} (f) < 0 \leqslant {\rm ind} (g)$; in Theorem \ref{sur}, ${\rm ind} (f) > 0 \geqslant {\rm ind} (g).$

\medbreak 

The index is also invariant under postcomposition and precomposition by permutations. 

\begin{theorem} \label{coset}
Let $f: \OO \ra \OO$ be a near-bijection. If $\pi \in S_{\OO}$ is any permutation then 
$${\rm ind} (\pi \circ f) = {\rm ind} (f) = {\rm ind} (f \circ \pi).$$
\end{theorem}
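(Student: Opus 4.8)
The plan is to verify the two equalities separately, in each case by examining term-by-term the three quantities $|\OO_g'|$, $|g(\OO_g')|$ and $|g(\OO)'|$ that make up $\ind(g)$: first for $g = \pi \circ f$ (via Theorem \ref{left}) and then for $g = f \circ \pi$ (via Theorem \ref{right}). The only properties of $\pi$ I will need beyond those two theorems are that a permutation preserves the cardinality of any subset, that it commutes with complementation under both image and preimage (so that $\pi(A') = \pi(A)'$ and $\overleftarrow{\pi}(A') = \overleftarrow{\pi}(A)'$), and that $\pi \circ \overleftarrow{\pi}$ acts as the identity on subsets because $\pi$ is surjective.

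For postcomposition, Theorem \ref{left} supplies $\OO_{\pi \circ f} = \OO_f$ and $\pi \circ f (\OO) = \pi(f(\OO))$. The first gives $|\OO_{\pi \circ f}'| = |\OO_f'|$ at once, and then $(\pi \circ f)(\OO_{\pi \circ f}') = \pi(f(\OO_f'))$ has the same cardinality as $f(\OO_f')$ since $\pi$ is injective. The range term I handle by taking complements in $\pi \circ f (\OO) = \pi(f(\OO))$: as $\pi$ is a bijection, $(\pi \circ f)(\OO)' = \pi(f(\OO)')$, which again has the same cardinality as $f(\OO)'$. Assembling the three contributions yields $\ind(\pi \circ f) = \ind(f)$.

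For precomposition, Theorem \ref{right} supplies $\OO_{f \circ \pi} = \overleftarrow{\pi}(\OO_f)$ and $f \circ \pi (\OO) = f(\OO)$. Complementation and the fact that $\overleftarrow{\pi}$ is a cardinality-preserving bijection on subsets give $\OO_{f \circ \pi}' = \overleftarrow{\pi}(\OO_f')$ with $|\OO_{f \circ \pi}'| = |\OO_f'|$. For the middle term, $(f \circ \pi)(\OO_{f \circ \pi}') = f\bigl(\pi(\overleftarrow{\pi}(\OO_f'))\bigr) = f(\OO_f')$, using $\pi \circ \overleftarrow{\pi} = \mathrm{I}$ on subsets; here no cardinality adjustment is even required. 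The range term is equal outright. Hence $\ind(f \circ \pi) = \ind(f)$.

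No step presents a genuine obstacle; the argument is bookkeeping built directly on Theorems \ref{left} and \ref{right}. The one point deserving a moment's care is the middle term in the precomposition case, where one must confirm that applying $\pi$ to the preimage $\overleftarrow{\pi}(\OO_f')$ recovers $\OO_f'$ exactly (valid precisely because $\pi$ is surjective), so that its image under $f$ is genuinely $f(\OO_f')$ rather than merely a set of equal cardinality.
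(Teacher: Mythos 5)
Your proposal is correct and follows exactly the paper's route: the paper's proof simply cites Theorem \ref{left} for ${\rm ind}(\pi \circ f) = {\rm ind}(f)$ and Theorem \ref{right} for ${\rm ind}(f) = {\rm ind}(f \circ \pi)$, and your term-by-term bookkeeping (including the observation that $\pi \circ \overleftarrow{\pi}$ is the identity on subsets by surjectivity, so the middle term in the precomposition case is an outright equality) is just the spelled-out version of what that citation leaves implicit. Nothing is missing and no step fails.
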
 

\begin{proof} 
Theorem \ref{left} yields ${\rm ind} (\pi \circ f) = {\rm ind} (f) $ while Theorem \ref{right} yields $ {\rm ind} (f) = {\rm ind} (f \circ \pi).$
\end{proof} 

\medbreak 

In the opposite direction, we claim that if the near-bijections $f$ and $g$ of $\OO$ have equal index, then there exist permutations $\lambda$ and $\rho$ such that $\lambda \circ f \equiv g \equiv f \circ \rho$. After a simplifying reduction, our justification of this claim will come in four parts, exhibiting $\lambda$ and $\rho$ when the index is negative and when the index is positive. 

\medbreak 

The aforementioned simplification is as follows. 

\medbreak 

\begin{theorem} \label{red} 
Let $f: \OO \ra \OO$ be a near-bijection. If ${\rm ind} (f) \leqslant 0$ then there exists an injection $g: \OO \ra \OO$ such that $g \equiv f$ and $|g (\OO)'| = - {\rm ind} (f)$. If ${\rm ind} (f) \geqslant 0$ then there exists a surjection $g: \OO \ra \OO$ such that $g \equiv f$ and $|\OO_g'| - |f(\OO_g')| = {\rm ind} (f).$
\end{theorem}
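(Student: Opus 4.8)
The plan is to exploit the disjoint fibre decomposition recorded in the Remark after Theorem \ref{comp}, namely that $\OO_f'$ is the disjoint union over $\overline{\oo} \in f(\OO_f')$ of the fibres $\overleftarrow{f}(\{\overline{\oo}\})$, each of which has at least two elements. Write $c := |\OO_f'| - |f(\OO_f')|$ and $m := |f(\OO)'|$, so that $\ind(f) = c - m$; here $c = \sum_{\overline{\oo}}(|\overleftarrow{f}(\{\overline{\oo}\})| - 1)$ counts the ``surplus'' points that obstruct injectivity, while $m$ counts the values missing from the range. In each fibre I would single out one point as its \emph{representative} and call the remaining $c$ points (over all fibres) the \emph{surplus} points. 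Both modifications below alter $f$ only on the finite set $\OO_f'$, so the resulting map $g$ automatically satisfies $g \equiv f$.

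For the first assertion assume $\ind(f) \leqslant 0$, so $c \leqslant m$. Since the surplus points are at most as numerous as the missing values, I can choose an injection of the surplus points into $f(\OO)'$ and define $g$ to agree with this injection on the surplus points and with $f$ everywhere else. To see that $g$ is injective I would split $\OO$ into three pieces with pairwise disjoint $g$-images: the points of $\OO_f$, on which $g = f$ is injective with values in $f(\OO_f)$; the representatives, which $g$ sends injectively onto $f(\OO_f') = f(\OO) \setminus f(\OO_f)$ (Theorem \ref{comp}); and the surplus points, which $g$ sends injectively into $f(\OO)'$. As $f(\OO_f)$, $f(\OO_f')$ and $f(\OO)'$ are pairwise disjoint, $g$ is injective.

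For the second assertion assume $\ind(f) \geqslant 0$, so $m \leqslant c$. Now I would choose $m$ of the surplus points and map them bijectively onto the $m$ missing values $f(\OO)'$, again leaving $f$ unchanged elsewhere, to obtain $g$. Every previously missing value is now attained and every previously attained value is retained (each fibre keeps its representative), so $g$ is surjective. Because the redirected points land injectively in values formerly outside the range, no new coincidences are created; hence $\OO_g' \subseteq \OO_f'$ and, crucially, $g$ agrees with $f$ on $\OO_g'$, so that $f(\OO_g') = g(\OO_g')$.

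It remains to read off the index data, and here I would let Theorem \ref{insensitive} do the arithmetic rather than recount by hand. In the first case $g$ is an injection whose range complement is finite (as $g \equiv f$), so $g$ is a near-bijection by Theorem \ref{insur}(i); then $\OO_g' = \emptyset$ gives $\ind(g) = -|g(\OO)'|$, while $g \equiv f$ gives $\ind(g) = \ind(f)$, whence $|g(\OO)'| = -\ind(f)$. In the second case $g$ is a surjection with $\OO_g'$ finite, so $g$ is a near-bijection by Theorem \ref{insur}(ii); surjectivity gives $\ind(g) = |\OO_g'| - |g(\OO_g')|$, and combining $g(\OO_g') = f(\OO_g')$ with $\ind(g) = \ind(f)$ yields $|\OO_g'| - |f(\OO_g')| = \ind(f)$. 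The only genuine work is the finite bookkeeping of the two middle paragraphs—verifying that the three target sets really are disjoint in the injective case, and that redirecting surplus points neither destroys surjectivity nor enlarges $\OO_g'$ in the surjective case. This is the main obstacle, but it amounts to tracking the fibre decomposition carefully rather than to any substantive difficulty.
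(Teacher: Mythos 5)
Your construction is the paper's own: both arguments start from the disjoint fibre decomposition $\OO_f' = \bigcupdot_{\overline{\oo} \in f(\OO_f')} \overleftarrow{f}(\{\overline{\oo}\})$, retain one point of each fibre, and redefine $f$ at the remaining (surplus) points so as to take distinct values in $f(\OO)'$ (injectively into $f(\OO)'$ when $\ind(f) \leqslant 0$, bijectively onto $f(\OO)'$ when $\ind(f) \geqslant 0$), the modification taking place on the finite set $\OO_f'$ so that $g \equiv f$ automatically. The one genuine divergence is in the final bookkeeping. The paper counts by hand: in the surjective case it tracks, stage by stage, how $|\OO_f'|$, $|f(\OO_f')|$ and $|f(\OO)'|$ change as each marked point is redirected (distinguishing whether the marked point is the next-to-last point of its fibre). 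You instead note that $g$ is a near-bijection by Theorem \ref{insur}, invoke Theorem \ref{insensitive} to get $\ind(g) = \ind(f)$, and read off the stated identities from $\OO_g' = \emptyset$ in the injective case and from $g(\OO)' = \emptyset$ together with $g(\OO_g') = f(\OO_g')$ in the surjective case. This is legitimate --- Theorem \ref{insensitive} precedes Theorem \ref{red} and its proof is independent of it --- and it replaces the paper's most delicate counting with a one-line deduction; the only residual obligations are exactly the ones you discharge, namely injectivity via the pairwise disjointness of $f(\OO_f)$, $f(\OO_f')$ and $f(\OO)'$ (Theorem \ref{comp}), and, in the surjective case, that $\OO_g' \subseteq \OO_f'$ avoids the redirected points so that $f$ and $g$ agree there. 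A further small improvement: by redirecting only surplus (non-representative) points you make explicit that no fibre is emptied, so surjectivity survives --- the paper's instruction to ``mark $|f(\OO)'|$ points of $\OO_f'$'' leaves this constraint implicit.
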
 

\begin{proof} 
Once again, recall the disjoint decomposition 
$$\OO_f' = \bigcupdot_{\overline{\oo} \in f(\OO_f')} \overleftarrow{f} (\{ \overline{\oo} \}).$$
In case $|\OO_f'| - |f(\OO_f')| \leqslant |f(\OO)'|$ we mark all but one point in $ \overleftarrow{f} (\{ \overline{\oo} \})$ for each $\overline{\oo} \in f(\OO_f')$ and redefine $f$ at the marked points to assume distinct values in $f(\OO)'$; this `empties' $\OO_f$ but may not `fill' $f(\OO)'$. The result is an injective map $g \equiv f$ such that $|g (\OO)'| = - {\rm ind} (f)$. In case $|\OO_f'| - |f(\OO_f')| \geqslant |f(\OO)'|$ we mark $|f(\OO)'|$ points of $\OO_f'$ and redefine the value of $f$ at each marked point in turn, mapping the set of marked points to $f(\OO)'$ bijectively. At each stage, $|f(\OO)'|$ is reduced by one; at each stage, if the marked point $\oo$ is not (currently) the next-to-last point of $\overleftarrow{f} (\{\overline{\oo}\})$ for some $\overline{\oo} \in f(\OO_f')$ then $|\OO_f'|$ falls by one but $|f(\OO_f')|$ is unchanged, while if $\oo$ is such a next-to-last point then $|\OO_f'|$ falls by two but $|f(\OO_f')|$ falls by one. This procedure `fills' $f(\OO)'$ but may not `empty' $\OO_f'$; it results in a surjective map $g \equiv f$ such that $|\OO_g'| - |f(\OO_g')| = {\rm ind} (f)$. 
\end{proof}

\medbreak 

\begin{theorem} \label{inper}
If the injective near-surjections $f: \OO \ra \OO$ and $g: \OO \ra \OO$ have equal index then there exist permutations $\lambda \in S_{\OO}$ and $\rho \in S_{\OO}$ such that $\lambda \circ f \equiv g \equiv f \circ \rho$.
\end{theorem}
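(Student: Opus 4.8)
The plan is to construct $\lambda$ and $\rho$ explicitly by inverting $f$ and $g$ on their ranges, and to invoke the equal-index hypothesis only to match up finite complements. Since $f$ and $g$ are injective near-surjections, $\OO_f' = \OO_g' = \emptyset$, so their indices collapse to ${\rm ind}(f) = -|f(\OO)'|$ and ${\rm ind}(g) = -|g(\OO)'|$; thus equality of indices says precisely that $|f(\OO)'| = |g(\OO)'| =: n$, a common finite number.

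For the left permutation I would exploit that $f$ restricts to a bijection $\OO \ra f(\OO)$. Setting $\lambda = g \circ f^{-1}$ on $f(\OO)$ gives a bijection from $f(\OO)$ onto $g(\OO)$ (injective because $g$ is, surjective by construction). Since $f(\OO)'$ and $g(\OO)'$ are finite of the common size $n$, I can pick any bijection between them and splice it with $g \circ f^{-1}$ to obtain a permutation $\lambda \in S_{\OO}$. By construction $\lambda \circ f = g \circ f^{-1} \circ f = g$ on all of $\OO$, so in particular $\lambda \circ f \equiv g$. This half is essentially free.

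For the right permutation the natural formula is $\rho(\oo) = f^{-1}(g(\oo))$, which makes sense only where $g(\oo) \in f(\OO)$. So I would set $E = \overleftarrow{g}(f(\OO))$ and define $\rho$ by this formula on $E$. Because $g$ is injective and $f(\OO)'$ is finite, $E' = \overleftarrow{g}(f(\OO)')$ is finite, so $E$ is cofinite; on $E$ the map $\rho$ is injective with image $f^{-1}(f(\OO) \cap g(\OO))$, whose complement in $\OO$ is $f^{-1}(f(\OO) \cap g(\OO)')$, again finite. On $E$ one has $f \circ \rho = g$, so once $\rho$ is extended to a permutation, its agreement with $g$ on the cofinite set $E$ will yield $f \circ \rho \equiv g$ as required.

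The crux, and the only place where the equal-index hypothesis does real work on the right, is showing that $\rho|_E$ genuinely extends to a permutation of $\OO$: the finite set $E'$ being mapped from and the finite complement of the image being mapped onto must have the same cardinality. Writing $k = |g(\OO) \cap f(\OO)'|$ (the size of $E'$) and $m = |f(\OO) \cap g(\OO)'|$ (the size of the image's complement), I would partition each of $f(\OO)'$ and $g(\OO)'$ against the range of the other map, obtaining $n = |f(\OO)'| = k + |f(\OO)' \cap g(\OO)'|$ and $n = |g(\OO)'| = m + |f(\OO)' \cap g(\OO)'|$; cancelling the shared term $|f(\OO)' \cap g(\OO)'|$ forces $k = m$. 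Any bijection $E' \ra f^{-1}(f(\OO) \cap g(\OO)')$ then completes $\rho|_E$ to a permutation $\rho \in S_{\OO}$. I expect this counting identity $k = m$ to be the one genuinely non-formal step; everything else is bookkeeping about inverting injections on their ranges.
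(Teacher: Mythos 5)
Your proposal is correct and follows essentially the same route as the paper: the left permutation is $g \circ f^{-1}$ on $f(\OO)$ spliced with any bijection $f(\OO)' \ra g(\OO)'$ (giving exact equality $\lambda \circ f = g$), and the right permutation is $f^{-1} \circ g$ on $\overleftarrow{g}(f(\OO))$, completed via the same counting identity $|f(\OO)' \cap g(\OO)| = |f(\OO) \cap g(\OO)'|$ obtained by partitioning each complement against the other range and cancelling $|f(\OO)' \cap g(\OO)'|$. Your observation that $\overleftarrow{f}(g(\OO)) = f^{-1}(f(\OO) \cap g(\OO))$ is just a notational variant of the paper's target set, so there is no substantive difference.
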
 

\begin{proof} 
($\lambda$) If $\oo \in f(\OO)$ then $\oo = f(\oo')$ for a unique $\oo' \in \OO$ and we define $\lambda (\oo) := g(\oo')$; the resulting map $f(\OO) \ra g(\OO)$ is plainly a bijection. As the finite cardinalities $|f(\OO)'| = - {\rm ind} (f)$ and $|g(\OO)'| = - {\rm ind} (g)$ are equal, there is a bijection $f(\OO)' \ra g(\OO)'$. Piecing together these two bijections manufactures a bijection $\lambda \in S_{\OO}$ such that $\lambda \circ f = g$ in fact. \par 
($\rho$) Let $\oo \in \overleftarrow{g} (f(\OO))$: say $g(\oo) = f(\oo')$ for an $\oo' \in \OO$ that is unique as $f$ is injective; we define $\rho(\oo) := \oo'$ and note that $\rho(\oo)$ actually lies in $\overleftarrow{f} (g(\OO))$. The map $\overleftarrow{g} (f(\OO)) \ra \overleftarrow{f} (g(\OO))$ so defined is plainly a bijection. Notice that 
$$f(\OO)' = (f(\OO)' \cap g(\OO)) \cupdot (f(\OO)' \cap g(\OO)')$$
and 
$$g(\OO)' = (f(\OO) \cap g(\OO)') \cupdot (f(\OO)' \cap g(\OO)')$$
where the left-most sets have equal finite cardinality and the right-most sets are equal, whence 
$$|\overleftarrow{g} (f(\OO))'| = |f(\OO)' \cap g(\OO)| = |f(\OO) \cap g(\OO)'| = | \overleftarrow{f} (g(\OO))'|$$
and so there exists a bijection $\overleftarrow{g} (f(\OO))' \ra \overleftarrow{f} (g(\OO))'$. Piecing together these two bijections manufactures a bijection $\rho \in S_{\OO}$ such that $ f = g \circ \rho$ on $\overleftarrow{g} (f(\OO))$ and therefore $f \equiv g \circ \rho$. 
\end{proof}

\medbreak 

{\bf Remark:} Theorem \ref{inper} and the first part of Theorem \ref{red} together show that if $f: \OO \ra \OO$ and $g: \OO \ra \OO$ are near-bijections with the same non-positive index then  there exist permutations $\lambda \in S_{\OO}$ and $\rho \in S_{\OO}$ such that $\lambda \circ f \equiv g \equiv f \circ \rho$.

\medbreak 

\begin{theorem} \label{surper}
If the surjective near-injections $f: \OO \ra \OO$ and $g: \OO \ra \OO$ have equal index then there exist permutations $\lambda \in S_{\OO}$ and $\rho \in S_{\OO}$ such that $\lambda \circ f \equiv g \equiv f \circ \rho$.
\end{theorem}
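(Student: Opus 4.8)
The plan is to mirror the construction of Theorem \ref{inper}, dualizing injectivity to surjectivity and interchanging the roles played by the range and the monoset. Throughout I would write $n := {\rm ind} (f) = {\rm ind} (g)$; since $f$ and $g$ are surjective, Theorem \ref{comp} gives $f(\OO_f') = f(\OO_f)'$ and $g(\OO_g') = g(\OO_g)'$, so that $n = |\OO_f'| - |f(\OO_f')| = |\OO_g'| - |g(\OO_g')| \geqslant 0$. The basic tool is that $f$ restricts to a bijection $f|_{\OO_f} : \OO_f \ra f(\OO_f)$ between cofinite sets; let $s_f : f(\OO_f) \ra \OO_f$ be its inverse (a partial section of $f$), and define $s_g$ analogously. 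For brevity abbreviate the finite sets $f(\OO_f) ' = f(\OO_f')$ and $g(\OO_g)' = g(\OO_g')$ by $B_f$ and $B_g$.

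For the permutation $\lambda$ (postcomposition) I would match up the images of the common injective parts. Put $C := \OO_f \cap \OO_g$, a cofinite set on which both $f$ and $g$ are injective, and define $\lambda := g \circ s_f$ on $f(C)$; concretely, $\lambda (f(\oo)) = g(\oo)$ for $\oo \in C$. This is well defined and injective (because $f|_{\OO_f}$ and $g|_{\OO_g}$ are injective) and carries the cofinite set $f(C)$ bijectively onto the cofinite set $g(C)$. A bijection between two cofinite subsets of the infinite set $\OO$ extends to a permutation of $\OO$ exactly when the two finite complements have equal cardinality; granting this, once $\lambda$ is extended we obtain $\lambda \circ f = g$ on the cofinite set $C$, hence $\lambda \circ f \equiv g$.

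For the permutation $\rho$ (precomposition) I would instead lift $g$ through $f$ along the section $s_f$. On the set $A := \OO_g \cap \overleftarrow{g} (f(\OO_f))$ — cofinite, since preimages of finite sets under the near-injection $g$ are finite — define $\rho := s_f \circ g$, so that $f(\rho(\oo)) = g(\oo)$ for $\oo \in A$. This too is injective (as $g|_{\OO_g}$ and $s_f$ are) and carries $A$ bijectively onto a cofinite subset $\rho(A) \subseteq \OO_f$; extending it to a permutation $\rho \in S_{\OO}$ again requires $|A'| = |\rho(A)'|$, and then $f \circ \rho = g$ on $A$, so that $g \equiv f \circ \rho$.

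The main obstacle is the pair of cardinality matchings needed to extend each cofinite bijection to a genuine permutation; everything else is routine verification of well-definedness and cofiniteness. Both matchings reduce to the hypothesis $n = {\rm ind}(f) = {\rm ind}(g)$. For $\lambda$, reckoning complements through the bijection $f|_{\OO_f}$ gives $|f(C)'| = |B_f| + |\OO_f \cap \OO_g'|$ and, symmetrically, $|g(C)'| = |B_g| + |\OO_g \cap \OO_f'|$; since $|\OO_f \cap \OO_g'| - |\OO_g \cap \OO_f'| = |\OO_g'| - |\OO_f'|$, the difference $|f(C)'| - |g(C)'|$ collapses to $(|\OO_g'| - |B_g|) - (|\OO_f'| - |B_f|) = {\rm ind}(g) - {\rm ind}(f) = 0$. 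For $\rho$, a parallel computation identifies $g(A) = (B_f \cup B_g)'$, whence $|A'| = |\OO_g'| + |B_f \setminus B_g|$ and $|\rho(A)'| = |\OO_f'| + |B_g \setminus B_f|$; these agree exactly when $|\OO_g'| - |B_g| = |\OO_f'| - |B_f|$, that is, once more, precisely when the two indices coincide.
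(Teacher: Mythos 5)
Your proposal is correct and is essentially the paper's own argument: the paper also defines $\lambda$ on $f(\OO_f \cap \OO_g)$ by $\lambda(f(\oo)) = g(\oo)$ and $\rho$ on $\OO_g \cap \overleftarrow{g}(f(\OO_f))$ by lifting $g$ through the injective part of $f$ (your $s_f \circ g$), then extends each cofinite bijection across the finite complements after matching their cardinalities via the equal-index hypothesis. Your cardinality bookkeeping (e.g.\ $|A'| = |\OO_g'| + |B_f \setminus B_g|$ versus $|\rho(A)'| = |\OO_f'| + |B_g \setminus B_f|$) is just an equivalent rearrangement of the paper's add-and-cancel computations with $|A' \cap B'|$ and $|f(A)' \cap g(B)'|$.
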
 

\begin{proof} 
($\lambda$) Let $\oo \in f(\OO_f \cap \OO_g)$: thus, $\oo = f(\oo')$ for an $\oo' \in \OO_f \cap \OO_g$ that is unique by definition of $\OO_f$; we define $\lambda(\oo) := g(\oo')$ and note that $\lambda(\oo)$ actually lies in $g(\OO_f \cap \OO_g)$. By symmetry, the resulting map $\lambda: f(\OO_f \cap \OO_g) \ra g(\OO_f \cap \OO_g)$ is a bijection such that $\lambda \circ f$ and $g$ agree on $\OO_f \cap \OO_g$. As $\OO_f \cap \OO_g$ is cofinite, $\lambda$ may be extended by a bijection $f(\OO_f \cap \OO_g)' \ra g(\OO_f \cap \OO_g)'$ to produce $\lambda \in S_{\OO}$ as required once we have justified the following. \par 
Claim: $f(\OO_f \cap \OO_g)'$ and $g(\OO_f \cap \OO_g)'$ have the same finite cardinality. \par
[We shall simplify appearances by writing $A$ for $\OO_f$ and $B$ for $\OO_g$. Consider the disjoint decomposition 
$$f(A \cap B)' = (\OO \setminus f(A)) \cupdot (f(A) \setminus f(A \cap B)).$$ 
The first term on the right is $f(\OO \setminus A) = f(A')$ according to Theorem \ref{comp}. Regarding the second set, we claim that 
$$f(A) \setminus f(A \cap B) = f(A \setminus (A \cap B)).$$
On the one hand, if $\oo \in f(A) \setminus f(A \cap B)$ then $\oo = f(\oo')$ for some $\oo' \in A$ and this element $\oo'$ cannot lie in $A \cap B$. On the other hand, let $\oo \in A \setminus (A \cap B)$: were $f(\oo)$ to lie in $f(A \cap B)$ it would follow that $f(\oo) = f(\oo')$ for some $\oo' \in A \cap B$ whence the injectivity of $f|_A$ would force $\oo = \oo' \in A \cap B$ (contradiction); this places $f(\oo)$ in $f(A) \setminus f(A \cap B)$. The injectivity of $f|_A$ makes its restriction 
$$A \setminus (A \cap B) \ra f(A) \setminus f(A \cap B)$$
a bijection, while of course $A \setminus (A \cap B) = A \cap B'$ is finite. The foregoing disjoint decomposition of $f(A \cap B)'$ therefore yields 
$$|f(A \cap B)'| = |f(A)'| + |A \cap B'|$$
with a parallel expression for $|g(A \cap B)'|$. Now 
$$|f(A \cap B)'| + |A' \cap B'| = |f(A')| + |A \cap B'| + |A' \cap B'| = |f(A')| + |B'|$$
and similarly 
$$|g(A \cap B)'| + |A' \cap B'| = |g(B')| + |A' \cap B| + |A' \cap B'| = |g(B')| + |A'|.$$
Here, ${\rm ind} (f) = {\rm ind} (g)$ may be written $|f(A')| + |B'| = |g(B')| + |A'|$; consequently, cancellation of $|A' \cap B'|$ justifies the claim.] \par 
($\rho$) We continue to write $A = \OO_f$ and $B = \OO_g$. Let $\oo \in B \cap \overleftarrow{g} (f(A))$: thus $\oo \in B$ and $g(\oo) \in f(A)$; say $g(\oo) = f(\oo')$ for a unique $\oo' \in A$. We define $\rho(\oo) := \oo'$  and note that $\rho(\oo) \in A \cap \overleftarrow{f} (g(B))$. By symmetry, the resulting map $\rho: B \cap \overleftarrow{g} (f(A)) \ra A \cap \overleftarrow{f} (g(B))$ is a bijection such that $f \circ \rho = g$.  This bijection may be extended by a bijection $ B \cap \overleftarrow{g} (f(A))' \ra A \cap \overleftarrow{f} (g(B))'$ to produce $\rho \in S_{\OO}$ as required once we have justified the following. \par 
Claim: $B \cap \overleftarrow{g} (f(A))'$ and $A \cap \overleftarrow{f} (g(B))'$ have the same finite cardinality. \par 
[Consider the disjoint decomposition
$$B \cap \overleftarrow{g} (f(A))' = (\OO \setminus B) \cupdot (B \setminus \overleftarrow{g} (f(A))).$$
Here, $g$ restricts to a bijection from $B \setminus \overleftarrow{g} (f(A))) = B \cap \overleftarrow{g} (f(A))' = B \cap \overleftarrow{g} (f(A'))$ (the last equality holding by virtue of Theorem \ref{comp}) to $g(B) \cap f(A')$: certainly $g$ maps $B \cap \overleftarrow{g} (f(A')) \subseteq B$ injectively to $g(B) \cap f(A')$; if $\oo \in g(B) \cap f(A')$ then there exist $\overline{\oo} \in B$ and $\oo' \in A'$ so that $\oo = g(\overline{\oo}) = f(\oo')$ whence $\oo = g(\overline{\oo})$ with $\overline{\oo} \in B \cap \overleftarrow{g} (f(A'))$. It follows that 
$$|B \setminus \overleftarrow{g} (f(A))| = |g(B) \cap f(A')| = |g(B) \cap f(A)'|$$
whence the foregoing disjoint decomposition of $B \cap \overleftarrow{g} (f(A))'$ yields 
$$|(B \cap \overleftarrow{g} (f(A)))'| = |B'| + |g(B) \cap f(A)'|$$
and similarly 
$$|(A \cap \overleftarrow{f} (g(B)))'| = |A'| + |f(A) \cap g(B)'|.$$
All that remains is elementary arithmetic on these two equations: add $|f(A)' \cap g(B)'|$ throughout; invoke equality of indices in the form $|B'| + |f(A')| = |A'| + |g(B')|$; and finally cancel $|f(A)' \cap g(B)'|$ throughout.]
\end{proof} 

\medbreak 

{\bf Remark:} Theorem \ref{inper} and the second part of Theorem \ref{red} together show that if $f: \OO \ra \OO$ and $g: \OO \ra \OO$ are near-bijections with the same non-negative index then  there exist permutations $\lambda \in S_{\OO}$ and $\rho \in S_{\OO}$ such that $\lambda \circ f \equiv g \equiv f \circ \rho$. 

\medbreak 

We close this section by remarking on a difference between injective near-surjections and surjective near-injections; this stems from the fact that when $f$ is a near-bijection, $f(\OO)'$ is essentially featureless whereas $\OO_f'$ has internal structure. As we saw in the proof of Theorem \ref{inper}, if $f$ and $g$ are injective near-surjections with the same index then there exists a permutation $\lambda \in S_{\OO}$ such that $\lambda \circ f = g$ (true equality). By contrast, if $f$ and $g$ are surjective near-injections then $g$ need not be obtained from $f$ by composition with a permutation on either side; the best that we can hope for in general is almost equality. The following result shows what happens when we demand equality. 

\medbreak 

\begin{theorem} 
Let $f: \OO \ra \OO$ and $g: \OO \ra \OO$ be surjective near-injections. If 
$$f(\OO_f') = g(\OO_g') =: \overline{\OO}$$ 
and 
$$(\forall \overline{\oo} \in \overline{\OO}) \; \; \; \; \; |\overleftarrow{f} (\{\overline{\oo}\})| =  |\overleftarrow{g} (\{\overline{\oo}\})| $$
then there exists a permutation $\rho \in S_{\OO}$ such that $g = f \circ \rho.$
\end{theorem}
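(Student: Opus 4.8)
The plan is to construct $\rho$ fiber by fiber. Since $f$ and $g$ are surjective, the two families of fibers $\{\overleftarrow{f}(\{y\}) : y \in \OO\}$ and $\{\overleftarrow{g}(\{y\}) : y \in \OO\}$ each partition $\OO$. The demand $g = f \circ \rho$ means exactly that, whenever $g(\oo) = y$, we need $f(\rho(\oo)) = y$; in other words $\rho$ must carry each $g$-fiber $\overleftarrow{g}(\{y\})$ into the corresponding $f$-fiber $\overleftarrow{f}(\{y\})$. For the resulting map to be a permutation it must restrict to a bijection on each such fiber. Thus a suitable $\rho$ exists precisely when $|\overleftarrow{f}(\{y\})| = |\overleftarrow{g}(\{y\})|$ for every $y \in \OO$: one then selects a bijection between each matching pair of fibers and, because both fiber families partition $\OO$, pieces these together into a single bijection $\rho \in S_{\OO}$ with $g = f \circ \rho$. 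This last assembly is routine, so the entire argument reduces to the fiber-cardinality count.

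The real content, then, is to verify that count for every $y$, and the set $\overline{\OO}$ organizes the two cases cleanly. First I would invoke Theorem \ref{comp}: since $f$ is a surjective near-injection, $\overline{\OO} = f(\OO_f') = f(\OO_f)'$, and likewise $\overline{\OO} = g(\OO_g)'$, so that $\OO \setminus \overline{\OO} = f(\OO_f) = g(\OO_g)$. For $y \in \OO \setminus \overline{\OO}$, write $y = f(\oo_1)$ with $\oo_1 \in \OO_f$; the definition of the monoset forces $\overleftarrow{f}(\{y\}) = \{\oo_1\}$, so this fiber is a singleton, and the same reasoning applied to $g$ gives $|\overleftarrow{g}(\{y\})| = 1$ as well. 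For $y \in \overline{\OO}$, the equality $|\overleftarrow{f}(\{y\})| = |\overleftarrow{g}(\{y\})|$ is precisely the second hypothesis (these are the fibers appearing in the decomposition $\OO_f' = \bigcupdot_{\overline{\oo} \in \overline{\OO}} \overleftarrow{f}(\{\overline{\oo}\})$ and its $g$-analogue). Hence corresponding fibers match in cardinality over all of $\OO$, and the construction of the previous paragraph goes through.

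I do not expect a genuine obstacle here; the one point demanding care is ensuring that the fiber-matching condition holds over \emph{every} $y \in \OO$, not merely over the exceptional set $\overline{\OO}$ where the hypothesis is stated. This is exactly why Theorem \ref{comp} is indispensable at the outset: it identifies $\OO \setminus \overline{\OO}$ as the common injective image $f(\OO_f) = g(\OO_g)$ and thereby guarantees that the fibers over $\OO \setminus \overline{\OO}$ are singletons for both maps, so they match automatically. (As a sanity check, summing cardinalities in the two disjoint decompositions and using the hypothesis recovers $|\OO_f'| = |\OO_g'|$, the global shadow of the local count that the construction actually uses.)
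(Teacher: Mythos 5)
Your proof is correct and is essentially the paper's own argument: the paper's bijection $\OO_g \ra \OO_f$ (defined via uniqueness in the monosets) is exactly your fiberwise matching over $\OO \setminus \overline{\OO}$, where Theorem \ref{comp} shows both maps have singleton fibers, and the paper's bijection $\OO_g' \ra \OO_f'$ is exactly your fiber-by-fiber matching over $\overline{\OO}$ via the cardinality hypothesis. The only differences are presentational — you treat all fibers of $f$ and $g$ uniformly where the paper splits $\OO$ into $\OO_g$ and $\OO_g'$, and you make explicit that the fiber-cardinality condition is also necessary, which the paper relegates to a closing remark.
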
 

\begin{proof} 
Note by Theorem \ref{comp} that 
$$f(\OO_f) = f(\OO_f')' = g(\OO_g')' = g(\OO_g).$$
Let $\oo \in \OO_g$: then $g(\oo) \in g(\OO_g) = f(\OO_f)$ so that $g(\oo) = f(\oo')$ for some $\oo' \in \OO_f$ that is unique by definition of $\OO_f$; define $\rho (\oo) := \oo'$. By symmetry, this defines a bijection $\rho: \OO_g \ra \OO_f$ such that $f \circ \rho = g|_{\OO_g}$. We construct a bijection $\rho: \OO_g' \ra \OO_f'$ as follows: for each $\overline{\oo} \in \overline{\OO}$ we may choose a bijection $\overleftarrow{g} (\{ \overline{\oo}\} ) \ra \overleftarrow{f} (\{\overline{\oo}\})$; these individual bijections piece together to give a bijection 
$$\OO_g' =  \bigcupdot_{\overline{\oo} \in \overline{\OO}} \overleftarrow{g} (\{ \overline{\oo} \}) \xrightarrow{\rho}   \bigcupdot_{\overline{\oo} \in \overline{\OO}} \overleftarrow{f} (\{ \overline{\oo} \}) = \OO_f'.$$
Finally, the bijections $\OO_g \ra \OO_f$ and $\OO_g' \ra \OO_f'$ collate to provide a permutation $\rho \in S_{\OO}$ with the desired property $g = f \circ \rho$. 
\end{proof} 

\medbreak 

It is readily verified that this sufficient condition for the existence of $\rho$ is also necessary. 

\medbreak 

\section{$\mathbb{G}_{\OO}$}

\medbreak 

In this section, we frame our results regarding near-bijections and their indices in a group-theoretical setting: as we shall see, the $\equiv$-classes of near bijections $\OO \ra \OO$ constitute a group, on which the index descends to define a $\mathbb{Z}$-valued group homomorphism whose kernel comprises the $\equiv$-classes of (almost) bijections. 

\medbreak 

Denote by $\mathcal{I}_{\OO}$ the set comprising all near-injections from $\OO$ to itself. As a consequence of Theorem \ref{incom}, $\mathcal{I}_{\OO}$ is closed under the (associative) operation of composition; moreover, the identity map ${\rm I}$ on $\OO$ serves as an identity element. Consequently, $\mathcal{I}_{\OO}$ is a monoid. According to Theorem \ref{com}, composition in $\mathcal{I}_{\OO}$ respects almost equality. We denote the corresponding quotient monoid by  
$$\mathbb{I}_{\OO} := \mathcal{I}_{\OO}/\equiv.$$

\medbreak 

Denote by $\mathcal{G}_{\OO}$ the set comprising all near-bijections from $\OO$ to itself. Theorem \ref{bicom} informs us that $\mathcal{G}_{\OO}$ is closed under composition; thus $\mathcal{G}_{\OO}$ also is a monoid with ${\rm I}$ as identity element. Theorem \ref{com} ensures that composition in $\mathcal{G}_{\OO}$ respects almost equality and we obtain a quotient monoid 
$$\mathbb{G}_{\OO} := \mathcal{G}_{\OO}/\equiv.$$

\medbreak 

\begin{theorem} 
The quotient monoid $\mathbb{G}_{\OO}$ is a group. 
\end{theorem}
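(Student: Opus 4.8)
The plan is to show that $\mathbb{G}_{\OO}$, already established to be a monoid with identity $[{\rm I}]$, admits a two-sided inverse for each of its elements $[f]$. Since the composition on $\mathbb{G}_{\OO}$ is the operation descending from $\mathcal{G}_{\OO}$ (well defined by Theorem \ref{com}, as near-bijections are near-injections by Theorem \ref{near}), it suffices to produce, for each near-bijection $f$, a near-bijection $h$ with $h \circ f \equiv {\rm I}$ and $f \circ h \equiv {\rm I}$; then $[h]$ will serve as the inverse of $[f]$.

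First I would unpack the definition of near-bijection to fix cofinite subsets $A \subseteq \OO$ and $B \subseteq \OO$ for which the restriction $A \xrightarrow{f} B$ is a bijection, and let $\phi : B \ra A$ denote its inverse. Then I would define $h : \OO \ra \OO$ by setting $h|_B = \phi$ and letting $h$ take arbitrary values on the finite complement $B'$ (a constant value will do). Because $h$ restricts to the bijection $\phi$ between the cofinite sets $B$ and $A$, the map $h$ is itself a near-bijection regardless of its behaviour on $B'$, so $[h] \in \mathbb{G}_{\OO}$; moreover $h \circ f$ and $f \circ h$ are near-bijections by Theorem \ref{bicom}, so their classes are meaningful.

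The verification that $[h]$ inverts $[f]$ is then immediate. For $\oo \in A$ we have $f(\oo) \in B$, whence $h(f(\oo)) = \phi(f(\oo)) = \oo$, so $h \circ f$ agrees with ${\rm I}$ on the cofinite set $A$ and therefore $h \circ f \equiv {\rm I}$; dually, for $\oo \in B$ we have $f(h(\oo)) = f(\phi(\oo)) = \oo$, so $f \circ h$ agrees with ${\rm I}$ on the cofinite set $B$ and therefore $f \circ h \equiv {\rm I}$. Passing to $\equiv$-classes yields $[h][f] = [{\rm I}] = [f][h]$, as required.

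I do not expect any serious obstacle here. The one point worth flagging is that $|A'|$ and $|B'|$ may differ, so $h$ cannot in general be chosen to be a genuine permutation of $\OO$; this is harmless, since we only need an inverse in the quotient monoid, and almost equality to ${\rm I}$ on the cofinite sets $A$ and $B$ is exactly what the group structure on $\mathbb{G}_{\OO}$ demands.
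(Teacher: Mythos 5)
Your proposal is correct and matches the paper's own proof essentially verbatim: the paper likewise takes the inverse of the bijective restriction $A \xrightarrow{f} B$, extends it arbitrarily over the finite complement $B'$, and observes that the disagreement sets $D(g \circ f, {\rm I}) \subseteq A'$ and $D(f \circ g, {\rm I}) \subseteq B'$ are finite. Your extra remarks --- that the extension is itself a near-bijection and that $|A'|$ and $|B'|$ may differ, so a genuine permutation cannot in general be expected --- are sound and merely make explicit what the paper leaves implicit.
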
 

\begin{proof} 
Let $f \in \mathcal{G}_{\OO}$ be a near-bijection: say $f$ restricts to a bijection from the cofinite set $A \subseteq \OO$ to the cofinite set $B \subseteq \OO$; we must fashion a near-bijection $g \in \mathcal{G}_{\OO}$ whose $\equiv$-class $[g]$ is inverse to $[f]$. Simply define $g$ to be the inverse of $A \xrightarrow{f} B$ on $B$ and extend over the complement $B'$ arbitrarily. By construction, $g \in \mathcal{G}_{\OO}$ satisfies both $g \circ f|_A = {\rm I}|_A$ and $f \circ g|_B = {\rm I}|_B$; now $D(g \circ f, {\rm I}) \subseteq A'$ and $D(f \circ g, {\rm I}) \subseteq B'$ are finite, thus $g \circ f \equiv {\rm I} \equiv f \circ g$ and so $[g][f] = {\rm I} = [g][f]$ as required. 
\end{proof} 

\medbreak 

{\bf Remark:} Let $f, g \in \mathcal{I}_{\OO}$ be near-injections such that $g \circ f \equiv {\rm I} \equiv f \circ g$. To say that $g \circ f \equiv {\rm I}$ is to say that $g \circ f|_A = {\rm I}|_A$ for some cofinite $A \subseteq \OO$; it follows that $g(\OO) \supseteq A$ is cofinite, whence $g$ is also a near-surjection and hence a near-bijection by Theorem \ref{near}; likewise, $f \circ g \equiv {\rm I}$ implies that $f$ is a near-bijection. In this way, we identify $\mathbb{G}_{\OO}$ as the group of units in $\mathbb{I}_{\OO}.$ 

\medbreak 

We mentioned in Theorem \ref{almostnear} that each almost bijective map is a near-bijection. Denote by $\mathbb{S}_{\OO}$ the set of $\equiv$-classes of almost bijective maps $\OO \ra \OO$; plainly, $\mathbb{S}_{\OO}$ is a subgroup of $\mathbb{G}_{\OO}$. This subgroup $\mathbb{S}_{\OO}$ arises in another way, as follows. The circumstance that each permutation of $\OO$ is certainly a near-bijection of $\OO$ is expressed by the inclusion map $S_{\OO} \ra \mathcal{G}_{\OO}$, which is of course a homomorphism (of monoids); the quotient map $\mathcal{G}_{\OO} \ra \mathbb{G}_{\OO}$ is also a (monoid) homomorphism. The composite map $S_{\OO} \ra \mathbb{G}_{\OO}$ is then a group homomorphism, with image exactly $\mathbb{S}_{\OO}$. Notice that the kernel of the resulting surjective homomorphism $S_{\OO} \ra \mathbb{S}_{\OO}$ comprises precisely all the permutations of $\OO$ that act as the identity on a cofinite set; these permutations make up the {\it finitary} symmetric group $FS_{\OO}$. The group $\mathbb{S}_{\OO}$ is therefore canonically isomorphic to the quotient $S_{\OO} / FS_{\OO}.$ 

\medbreak 

According to Theorem \ref{insensitive}, the index is constant on $\equiv$-classes and therefore descends to a map 
$${\rm Ind} : \mathbb{G}_{\OO} \ra \mathbb{Z}$$
given by the requirement that if $f \in \mathcal{G}_{\OO}$ then 
$${\rm Ind} [f] = {\rm ind} (f).$$

\medbreak 

 Theorem \ref{coset} has the consequence that ${\rm Ind}$ is constant on each coset of $\mathbb{S}_{\OO}$, whether left coset or right coset. In the opposite direction, the Remark after Theorem \ref{inper} implies that if ${\rm Ind}$ has the same non-positive value on two elements of $\mathbb{G}_{\OO}$ then these elements lie in the same left coset and the same right coset of $\mathbb{S}_{\OO}$, while the Remark after Theorem \ref{surper} implies the same conclusion for matching non-negative values of ${\rm Ind}$. This at once establishes that the subgroup $\mathbb{S}_{\OO}$ of $\mathbb{G}_{\OO}$ is normal. 

\medbreak 

We claim that ${\rm Ind}$ is actually a group homomorphism. To see this, we take a minor detour and briefly examine some especially basic near-bijections. 

\medbreak 

Let $u: \OO \ra \OO$ be injective with $|u(\OO)'| = 1$: say $u(\OO)' = \{ \oo_0 \}$; of course, $u \in \mathcal{G}_{\OO}$ with ${\rm ind} (u) = -1$. Define $v : \OO \ra \OO$ as follows: $v ( \oo_0) = \oo_0$;  if $\oo = u(\oo') \in u(\OO)$ then $v(\oo) = \oo'$. Notice that $v(\OO) = \OO$ while $\OO_v' = \{ \oo_0, u(\oo_0) \}$ and $v(\OO_v') = \{ \oo_0 \}$; so $v \in \mathcal{G}_{\OO}$ with ${\rm ind} (v) = 1$. Plainly, $v \circ u = {\rm I}$ while $u \circ v|_{u(\OO)} = {\rm I}|_{u(\OO)}$. Thus the elements $[u]$ and $[v]$ of $\mathbb{G}_{\OO}$ are mutual inverses. 

\medbreak 

\begin{theorem} \label{n}
If $n$ is a positive integer then 
$$u^n (\OO)' = \{ \oo_0, u(\oo_0), \ \dots , u^{n - 1} (\oo_0) \}$$
and 
$$\OO_{v^n}' = \{ \oo_0, , u(\oo_0), \ \dots , u^n (\oo_0) \}.$$
\end{theorem}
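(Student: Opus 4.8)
The plan is to establish the two set equalities separately, in both cases leaning on the explicit descriptions of $u$ and $v$ together with the fact that the forward orbit $\oo_0, u(\oo_0), u^2(\oo_0), \dots$ consists of \emph{distinct} points. I would record this distinctness at the outset: since $u(\OO)' = \{\oo_0\}$ means $\oo_0 \notin u(\OO)$, and $u$ is injective, an equality $u^i(\oo_0) = u^j(\oo_0)$ with $i < j$ would yield $\oo_0 = u^{\,j-i}(\oo_0) \in u(\OO)$, a contradiction; so all orbit points are distinct.

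For the first equality I would induct on $n$, the base case $n = 1$ being the hypothesis $u(\OO)' = \{\oo_0\}$. For the inductive step I would write $u^{n+1}(\OO) = u\bigl(u^n(\OO)\bigr)$ and invoke the injectivity of $u$ in the form $u(\OO \setminus S) = u(\OO) \setminus u(S)$; taking $S = u^n(\OO)' = \{\oo_0, \dots, u^{n-1}(\oo_0)\}$ gives $u^{n+1}(\OO) = u(\OO) \setminus \{u(\oo_0), \dots, u^n(\oo_0)\}$, so that $u^{n+1}(\OO)' = \{\oo_0\} \cup \{u(\oo_0), \dots, u^n(\oo_0)\}$, as required.

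The substance of the theorem is the second equality, and the key is to understand the dynamics of $v$. I would first observe that $v$ fixes $\oo_0$ and acts as the inverse of $u$ on all of $u(\OO) = \OO \setminus \{\oo_0\}$, so that $v$ shifts the forward orbit one step toward $\oo_0$: namely $v(u^k(\oo_0)) = u^{k-1}(\oo_0)$ for $k \geq 1$ while $v(\oo_0) = \oo_0$. A short induction then gives $v^n(u^k(\oo_0)) = u^{\max(k-n,\,0)}(\oo_0)$ for all $k \geq 0$. I would also note that the only non-singleton fibre of $v$ is $\overleftarrow{v}(\{\oo_0\}) = \{\oo_0, u(\oo_0)\}$, and that $v$ carries the complement of the orbit of $\oo_0$ bijectively to itself; it follows that $v^n$ is injective on that complement and keeps its image disjoint from the orbit.

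With these facts in hand, $\OO_{v^n}'$—being the union of those fibres of $v^n$ that have at least two elements—is confined to the orbit of $\oo_0$, and among orbit points $v^n(u^k(\oo_0)) = \oo_0$ precisely when $k \leq n$, while for $k > n$ the exponents $k - n \geq 1$ are distinct and so the corresponding fibres are singletons. Hence the sole non-singleton fibre of $v^n$ is $\overleftarrow{v^n}(\{\oo_0\}) = \{\oo_0, u(\oo_0), \dots, u^n(\oo_0)\}$, which is exactly $\OO_{v^n}'$. The step I expect to demand the most care is not the orbit arithmetic but the bookkeeping that rules out collisions between orbit and non-orbit points under $v^n$; this is precisely the purpose of the invariance-and-injectivity observation about the complement of the orbit, and without it one cannot conclude that $\OO_{v^n}'$ lies inside the orbit.
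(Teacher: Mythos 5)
Your proof is correct and amounts to carrying out in full the induction that the paper dismisses with the single line ``in each case, the proof is a routine but instructive exercise on induction'': the inductive computation of $u^n(\OO)'$, the orbit formula $v^n(u^k(\oo_0)) = u^{\max(k-n,\,0)}(\oo_0)$, and the resulting fibre count for $v^n$ are exactly the content of that exercise. You were also right to single out the confinement of non-singleton fibres to the orbit --- the fact that $v$ carries the complement of $\{u^k(\oo_0) : k \geqslant 0\}$ bijectively to itself, so images of non-orbit points never collide with orbit points --- as the one step that is not pure bookkeeping; it is the hidden ingredient in the paper's ``routine'' induction, and your treatment of it is sound.
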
 

\begin{proof} 
In each case, the proof is a routine but  instructive exercise on induction. 
\end{proof} 

\medbreak 

{\bf Remark:} It follows at once that ${\ind} (u^n) = -n$ and (because $v^n$ has constant value $\oo_0$ on $\OO_{v^n}'$) that ${\rm ind} (v^n) = n$. 

\medbreak 

Now $[u]$ and $[v]$ are the two generators of an infinite cyclic subgroup of $\mathbb{G}_{\OO}$; of course, if $m, n \in \mathbb{Z}$ then $[v]^m [v]^n = [v]^{m + n}$. 

\medbreak 

This detour behind us, we are ready. 

\begin{theorem} 
The index map ${\rm Ind} : \mathbb{G}_{\OO} \ra \mathbb{Z}$ is a group homomorphism. 
\end{theorem}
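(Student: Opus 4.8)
The plan is to factor ${\rm Ind}$ through the quotient $\mathbb{G}_{\OO}/\mathbb{S}_{\OO}$ and identify that quotient as an infinite cyclic group on which ${\rm Ind}$ becomes manifestly additive. We have already noted that $\mathbb{S}_{\OO}$ is a normal subgroup of $\mathbb{G}_{\OO}$; write $Q := \mathbb{G}_{\OO}/\mathbb{S}_{\OO}$ and let $q : \mathbb{G}_{\OO} \ra Q$ be the quotient homomorphism. By Theorem \ref{coset} the map ${\rm Ind}$ is constant on each coset of $\mathbb{S}_{\OO}$, so it factors uniquely as ${\rm Ind} = \overline{{\rm Ind}} \circ q$ for some function $\overline{{\rm Ind}} : Q \ra \mathbb{Z}$. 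Since $q$ is already a homomorphism, it will suffice to prove that $\overline{{\rm Ind}}$ is one.

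Next I would show that $Q$ is cyclic, generated by $\bar{v} := q([v])$. This is where the earlier work pays off. Given any $[f] \in \mathbb{G}_{\OO}$, set $n := {\rm ind}(f)$; interpreting $[v]^{-1} = [u]$, the Remark after Theorem \ref{n} gives ${\rm ind}(v^n) = n$, so $[f]$ and $[v]^n$ have equal index. The Remarks following Theorem \ref{inper} and Theorem \ref{surper} then guarantee that two near-bijections of equal index lie in a common coset of $\mathbb{S}_{\OO}$ (the non-positive case handled by the former, the non-negative by the latter); hence $q([f]) = q([v]^n) = \bar{v}^n$. Thus every element of $Q$ is a power of $\bar{v}$. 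The same Remarks also yield injectivity of $\overline{{\rm Ind}}$: equal values of $\overline{{\rm Ind}}$ mean equal index, hence a common coset, hence equal elements of $Q$.

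Finally I would compute $\overline{{\rm Ind}}(\bar{v}^n) = {\rm Ind}([v^n]) = {\rm ind}(v^n) = n$ for every $n \in \mathbb{Z}$. Because these values are distinct, the powers $\bar{v}^n$ are distinct, so $\bar{v}$ has infinite order and $Q = \langle \bar{v} \rangle \cong \mathbb{Z}$. The homomorphism property is then immediate from $\bar{v}^m \, \bar{v}^n = \bar{v}^{m+n}$: for any two elements of $Q$,
$$\overline{{\rm Ind}}(\bar{v}^m \, \bar{v}^n) = \overline{{\rm Ind}}(\bar{v}^{m+n}) = m + n = \overline{{\rm Ind}}(\bar{v}^m) + \overline{{\rm Ind}}(\bar{v}^n).$$
As every element of $Q$ is such a power, $\overline{{\rm Ind}}$ is a homomorphism, and therefore so is ${\rm Ind} = \overline{{\rm Ind}} \circ q$.

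I do not expect a genuine obstacle here: the substantive content has already been discharged in Theorem \ref{inper} and Theorem \ref{surper}, whose Remarks supply the crucial implication that equal index forces a common coset. The only point demanding a little care is confirming that $Q$ is \emph{infinite} cyclic rather than merely cyclic, which is exactly what the distinctness of the values ${\rm ind}(v^n) = n$ provides; everything else is formal bookkeeping with the quotient map.
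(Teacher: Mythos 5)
Your proof is correct, and it is organized genuinely differently from the paper's, though it runs on exactly the same fuel. The paper argues directly on representatives: given $[f]$ of index $m$ and $[g]$ of index $n$, the Remarks after Theorem \ref{inper} and Theorem \ref{surper} (together with the Remark after Theorem \ref{n}, which gives ${\rm ind}(v^k)=k$ and ${\rm ind}(u^k)=-k$) supply permutations $\mu, \nu \in S_{\OO}$ with $f \equiv \mu \circ v^m$ and $g \equiv v^n \circ \nu$ --- note the deliberate asymmetry, a \emph{left} normal form for $f$ and a \emph{right} normal form for $g$, chosen so that the powers of $v$ become adjacent in $[f][g] = [\mu][v]^{m+n}[\nu]$; Theorem \ref{coset} then strips off $[\mu]$ and $[\nu]$, and the quotient group $\mathbb{G}_{\OO}/\mathbb{S}_{\OO}$ is never needed. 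You instead pass to that quotient $Q$ (legitimately: normality of $\mathbb{S}_{\OO}$ was established just beforehand, from the same Remarks), factor ${\rm Ind} = \overline{{\rm Ind}} \circ q$ via Theorem \ref{coset}, and use the equal-index-implies-common-coset implication to show $Q = \langle \bar v \rangle$, after which additivity is formal bookkeeping on powers of $\bar v$. Since the substantive inputs --- the two Remarks plus the index computation for $v^n$ --- are identical, neither route is more elementary; what yours buys is a stronger conclusion packaged en route, namely that $\overline{{\rm Ind}}: Q \ra \mathbb{Z}$ is an isomorphism, which immediately yields surjectivity of ${\rm Ind}$ and re-identifies $\mathbb{S}_{\OO}$ as its kernel, facts the paper extracts separately afterwards. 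One cosmetic point: your formula ${\rm ind}(v^n) = n$ for negative $n$ should be read through $[v]^{-1} = [u]$ and ${\rm ind}(u^{-n}) = n$, as you yourself flag; with that reading, every step checks out, including the observation that well-definedness of $\overline{{\rm Ind}}$ already forces the powers $\bar v^n$ to be distinct.
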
 

\begin{proof} 
Let $f \in \mathcal{G}_{\OO}$ have index $m$ and $g \in \mathcal{G}_{\OO}$ have index $n$. By the Remarks after Theorem \ref{inper} and Theorem \ref{surper},  there exist permutations $\mu \in S_{\OO}$ and $\nu \in S_{\OO}$ such that $f \equiv \mu \circ v^m$ and $g \equiv v^n \circ \nu$. Now 
$$[f] [g] = [\mu] [v]^m [v]^n [\nu] = [\mu] [v]^{m + n} [\nu]$$
whence by Theorem \ref{coset} we conclude that 
$${\rm Ind} [f][g] = m + n = {\rm Ind} [f] + {\rm Ind} [g].$$

\end{proof} 

\medbreak 

Observe that $\mathbb{S}_{\OO}$ arises here in yet another way: Theorem \ref{indzero} reveals that it is precisely the kernel of ${\rm Ind}$. 

\medbreak 

To put matters in other words, we have produced a short exact sequence of groups 
$${\rm I} \ra \mathbb{S}_{\OO} \ra \mathbb{G}_{\OO} \ra \mathbb{Z} \ra 0.$$
As $\mathbb{Z}$ is infinite cyclic, this short exact sequence splits: a splitting homomorphism is given by 
$$\mathbb{Z} \ra \mathbb{G}_{\OO} : n \mapsto [v]^n.$$

\medbreak 

We can say a little more. Let $\phi: \mathbb{Z} \ra \mathbb{G}_{\OO}$ be any homomorphism that splits this sequence in the sense that ${\rm Ind} \circ \phi$ is the identity map on $\mathbb{Z}$. The first part of Theorem \ref{red} places in the $\equiv$-class $\phi (-1) \in \mathbb{G}_{\OO}$ an injective self-map of $\OO$ whose range contains all but one point of $\OO$; by relabelling, we may take $\phi(-1) = [u]$ with $u: \OO \ra \OO$ the self-map introduced prior to Theorem \ref{n}. For each natural number $n$ let us write $\oo_n = u^n (\oo_0)$ and put $\OO_0 = \{ \oo_n : n \geqslant 0 \}$. Define a permutation $\pi \in S_{\OO}$ as follows: if $m \geqslant 0$ then $\pi (\oo_{2m}) = \oo_{2 m + 1}$ and $\pi (\oo_{2 m + 1}) = \oo_{2 m}$; while $\pi$ fixes the complement $\OO_0'$ pointwise. By direct calculation, $\pi \circ u$ fixes the points of $\OO_0$ with even labels and shifts those with odd labels, whereas $u \circ \pi$ fixes the points of $\OO_0$ with odd labels and shifts those with even labels; both composites agree with $u$ on $\OO_0'$. This shows that the disagreement set $D(\pi \circ u, u \circ \pi)$ is precisely the countably infinite set $\OO_0$: thus $\pi \circ u$ and $u \circ \pi$ are not almost equal, so $[u]$ and $[\pi]$ do not commute. In short, the image of a splitting homomorphism cannot lie in the centre of $\mathbb{G}_{\OO}$; though a semidirect product, $\mathbb{G}_{\OO}$ is not direct. 

\medbreak 

We leave to the reader the pleasure of tracing the parallels between the Fredholm theory for operators and the theory developed in this paper.

\bigbreak

\begin{center} 
{\small R}{\footnotesize EFERENCES}
\end{center} 
\medbreak 

Chapter XI of [1] addresses the Fredholm theory for operators and [2] covers the theory of permutation groups, both finite and infinite; [3] includes accounts of permutation groups and short exact sequences, both split and otherwise. 

\medbreak 

[1] J.B. Conway, {\it A Course in Functional Analysis}, Springer GTM 96 (1985). 

\medbreak 

[2] J.D. Dixon and B. Mortimer, {\it Permutation Groups}, Springer GTM 163 (1996). 

\medbreak 

[3] W.R. Scott, {\it Group Theory}, Prentice-Hall (1964); Dover (1987).

\medbreak

\end{document}